\documentclass[10pt,reqno]{amsart}
\usepackage{amsmath}
\usepackage{amssymb}
\usepackage{amsthm}
\usepackage{enumitem}
\usepackage{graphicx,color,tikz}
\usepackage[pdftex]{hyperref}


\textheight 20 true cm \textwidth 13 true cm \voffset 1.2 true cm
\hoffset -0.5 true cm \marginparwidth 2 true cm
\parindent 0.5 true cm


\newlength{\defbaselineskip}
\setlength{\defbaselineskip}{\baselineskip}
\newcommand{\setlinespacing}[1]%
           {\setlength{\baselineskip}{#1 \defbaselineskip}}

\numberwithin{equation}{section}

\newtheorem{thm}{Theorem}[]

\newtheorem{lem}{Lemma}
\newtheorem{prop}{Proposition}

\theoremstyle{definition}

\theoremstyle{remark}
\newtheorem{rem}{Remark}
\numberwithin{equation}{section}

\makeatletter
\@namedef{subjclassname@2020}{\textup{2020} Mathematics Subject Classification}
\makeatother

\begin{document}
\pagenumbering{arabic}\setcounter{page}{1}
\title[Quantum revivals and fractality]
{Quantum revivals and fractality for the Schr\"odinger equation}

\author{Gunwoo Cho, Jimyeong Kim, Seongyeon Kim, Yehyun Kwon and Ihyeok Seo}

\thanks{
	This work was supported by a KIAS Individual Grant (MG082902) at Korea Institute for Advanced Study and the POSCO Science Fellowship of POSCO TJ Park Foundation (S. Kim), NRF-RS-2023-00251435 (Y. Kwon), and NRF-2022R1A2C1011312 (I. Seo).}

\subjclass[2020]{Primary: 35B45; Secondary: 35B10}
\keywords{Quantum revival, fractality, Schr\"odinger equation}

\address{Department of Mathematics, Sungkyunkwan University, Suwon 16419, Republic of Korea}
\email{gwcho@skku.edu}

\email{jimkim@skku.edu}

\email{ihseo@skku.edu}

\address{School of Mathematics, Korea Institute for Advanced Study, Seoul 02455, Republic of Korea}
\email{synkim@kias.re.kr}

\address{Department of Mathematics, Changwon National University, Changwon 51140, Republic of Korea}
\email{yehyunkwon@changwon.ac.kr}

\begin{abstract}
We investigate the behavior of the Schr\"odinger equation under the influence of potentials, focusing on its relationship to quantum revivals and fractality. Our findings reveal that the solution displays fractal behavior at irrational times, while exhibiting regularity similar to the initial data at rational times. These extend the results of Oskolkov  \cite{O} and Rodnianski \cite{R2} on the free Schr\"odinger evolution to the general case regarding potentials.
\end{abstract}

\maketitle

\section{Introduction}\label{sec1}
In 1836, H. F. Talbot \cite{T1} observed an intriguing phenomenon in classical optics. When a plane wave passes through a periodic grating, the image of the diffracted wave is refocused and recovers the initial grating pattern with a specific periodicity. This phenomenon is now known as the Talbot effect. The distance between the grating and the point where the image is fully reconstructed is called the Talbot distance, denoted as $d_\mathrm{T}$. Lord Rayleigh \cite{R1} later derived a formula $d_\mathrm{T} = a^2/\lambda$, where $a$ is the spacing of the grating and $\lambda$ is the wavelength of the incident wave.

The Talbot effect, originally an optical experiment, can be described by using the Schr\"odinger evolution which leads us to mathematical studies of the physical consequences. One of such consequence is the phenomenon of quantum revivals, where a quantum wave packet is reconstructed at fractional multiples of a quantum period dependent on the Planck constant and energy levels \cite{BMS}.
In particular, Berry and Klein \cite{BK} used the free Schr\"odinger evolution to model the Talbot effect. They found a dichotomy between ``rational" and ``irrational" times, where a finite copy of the grating pattern reappears at every $t\in d_\mathrm{T} \mathbb Q$, but the wave function exhibits a fractal, nowhere-differentiable profile at every $t\notin d_\mathrm{T} \mathbb Q$.

As the grating is periodic in $x$, many authors have studied intensively the periodic Schr\"odinger equation 
\begin{equation}\label{e:schrodinger}
	\begin{cases}
		i\partial_t u+ \partial_{xx} u =0, \quad x\in \mathbb T:=\mathbb R/2\pi\mathbb Z, \ t\in \mathbb R, \\
		u(0,x)=f(x).
	\end{cases}
\end{equation}
Assuming the initial function $f$ is of bounded variation, Oskolkov \cite{O} proved that $u(t,x)$, the wave function, is continuous but nowhere differentiable in $x$ if $t\notin \pi \mathbb Q$, and necessarily contains (at most countable) discontinuities at every $t\in\pi \mathbb Q$ if $f$ has at least one discontinuity. Later, Kapitanski and Rodnianski \cite{KR} showed that $u(t,\cdot)$ has better regularity for $t\notin \pi \mathbb Q$. For every $t\in \pi \mathbb Q$, Taylor \cite{T3} found that if $f$ is the delta function, $u(t,\cdot)$ is a finite linear combination of translated delta functions with coefficients being Gauss sums.

The Talbot effect has also led to the study of the Schr\"odinger equation, particularly in relation to Berry's conjectures on the fractality of the graphs of $\mathrm{Re}\,u(t,\cdot)$, $\mathrm{Im}\,u(t,\cdot)$, and $|u(t,\cdot)|^2$. Berry \cite{B} calculated the fractal dimension of the temporal, spatial, and diagonal restrictions of the graph of the density function $|u(t,x)|^2$ for a step function $f$. Based on his calculations, he conjectured that for the $n$-dimensional linear Schr\"odinger equation confined in a box, the graphs of $\mathrm{Re}\,u(t,\cdot)$, $\mathrm{Im}\,u(t,\cdot)$, and $|u(t,\cdot)|^2$ have fractal dimension $D=n+1/2$ at almost all irrational times $t$.

Rodnianski  \cite{R2} rigorously justified Berry's conjecture in one dimension for the real and imaginary parts of the solution $e^{it\partial_{xx}}f$ with almost all irrational times, showing that the graphs have fractal dimension $D=3/2$ for every bounded variation $f$ not in $\bigcup_{\epsilon>0} H^{\frac{1}{2}+\epsilon}$. Recently, Chousionis, Erdo\u{g}an, and Tzirakis \cite{CET} extended this result to initial data in the larger class $BV\setminus\bigcup_{\epsilon >0}H^{s_0+\epsilon}$, $\frac{1}{2}\leq s_0<\frac{3}{4}$, and settled Berry's conjecture in one dimension for the density $|e^{it\partial_{xx}}f|^2$, showing that the dimension of the graph is $3/2$ at almost all $t\notin \pi\mathbb Q$ when $f$ is a step function with jumps only at rational points.

Drawing inspiration from previous work on the free Schr\"odinger evolution, we investigate quantum revivals and fractality for the Schr\"odinger equation in the presence of potentials:
\begin{equation}\label{PS}
	\begin{cases}
		iu_t + u_{xx} = V(x)u, \quad x\in\mathbb{T},\ t\in\mathbb{R}, \\
		u(0, x) = f(x).
	\end{cases}
\end{equation}
To state our results clearly, we introduce some notation. The space of functions of bounded variation on $\mathbb{T}$ is denoted by $BV$. For a $2\pi$-periodic function $f$, we define its Fourier coefficient as $\widehat f(k)=\frac1{2\pi}\int_r^{r+2\pi} e^{-ikx}f(x) dx$ for any $r\in\mathbb R$. For every $s\ge 0$, we denote the Sobolev space on $\mathbb{T}$ equipped with the norm 
\begin{equation}\label{e:snorm}
	\|f\|_{H^s} := \Big( \sum_{k\in\mathbb Z} \langle k\rangle^{2s} |\widehat f (k)|^2 \Big)^{1/2}
\end{equation}
 by $H^s$, where $\langle k\rangle=1+|k|$. We also use the notation $H^{s+}:=\bigcup_{\epsilon>0} H^{s+\epsilon}$ and $H^+:=H^{0+}$. Moreover, throughout this paper, if a Banach space of functions $\mathcal B^s$ is decreasing (in the sense of the set inclusion) with respect to a regularity index $s\in\mathbb R$, we use the following convenient notation:
\begin{equation}\label{e:cupcap}
	\mathcal B^{s+} := \bigcup_{\epsilon>0} \mathcal B^{s+\epsilon} , \quad \mathcal B^{s-} := \bigcap_{\epsilon>0} \mathcal B^{s-\epsilon}.
\end{equation}
Examples of $\mathcal B^{s}$ include the Sobolev space $H^s$, the Besov space $B_{p,q}^s$, and the H\"older space $C^s$.

Our first result concerns the dichotomy in regularity at rational and irrational times.

\begin{thm}\label{thm1}
Let $u$ be a solution to \eqref{PS} with $f\in BV$ and $V\in H^{+}$. If $f$ is continuous, then $u(t,x)$ is jointly continuous on $\mathbb{R}\times\mathbb{T}$. When $f$ is discontinuous, 
\begin{itemize}
\item
if $t\notin\pi\mathbb{Q}$, then $u(t,x)$ is continuous with respect to $x$,
\item if $t\in\pi\mathbb{Q}$, then $u(t,x)$ is bounded and necessarily contains (at most countable) discontinuities.
\end{itemize}
\end{thm}

We also determine the fractal dimension of the solution's graph at irrational time slices, in relation to the regularity of the potentials and initial data.

\begin{thm}\label{t:dim}
Let $u$ be a solution to \eqref{PS} with  $f\in BV$ and $V\in H^{+}$. Suppose that
\[	\sigma_0:= \sup\{\sigma\in \mathbb R\colon f\in H^\sigma\}<3/4	\]
(since $f\in BV$, we have $\sigma_0\ge 1/2$).
Let $r_0:= \sup\{r\in\mathbb R \colon V\in H^r\}$.
Then, for almost all $t\in \mathbb R\setminus \pi \mathbb Q$:
\begin{enumerate}
[leftmargin=0.6cm]
\item[$(i)$]  The upper Minkowski dimension of the graphs of $\mathrm{Re}\,u(t,\cdot)$, $\mathrm{Im}\,u(t,\cdot)$, and $|u(t,\cdot)|^2$ is less than or equal to $\max\big\{\frac{3}{2},\frac{5}{2}-\sigma_0 -r_0 \big\}$.
\item[$(ii)$] The upper Minkowski dimension of the graphs of $\mathrm{Re}\,u(t,\cdot)$ and $\mathrm{Im}\,u(t,\cdot)$ is greater than or equal to $\frac{5}{2}-2\sigma_0$, provided that $\sigma_0-\frac12<r_0$.
\end{enumerate}
\end{thm}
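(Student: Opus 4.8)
The plan is to peel off the free Schr\"odinger evolution and to show that the correction coming from the potential is strictly smoother, so that it affects the three fractal dimensions only through its Sobolev regularity.

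First I would reduce to the case $\widehat V(0)=0$: writing $\bar V=\widehat V(0)$ and $u=e^{-i\bar Vt}\widetilde u$, the function $\widetilde u$ solves \eqref{PS} with the potential $V-\bar V$, which has the same index $r_0$ and mean zero, while the unimodular, smooth-in-$t$ factor $e^{-i\bar Vt}$ alters none of the Minkowski dimensions of the graphs of $\mathrm{Re}\,u$, $\mathrm{Im}\,u$, $|u|^2$. By Duhamel's formula $u(t,\cdot)=e^{it\partial_{xx}}f+w(t,\cdot)$ with $w(t,\cdot)=-i\int_0^t e^{i(t-s)\partial_{xx}}(Vu(s,\cdot))\,ds$, and iteration gives the Dyson series $u=\sum_{n\ge 0}u_n$. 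On the Fourier side $\widehat{u_n(t)}(k)$ is a sum over frequency chains $k=m_0\to m_1\to\cdots\to m_n$ of $\big(\prod_j\widehat V(m_{j-1}-m_j)\big)\widehat f(m_n)$ times an iterated oscillatory integral $\int_{0<s_n<\cdots<s_1<t}\prod_j e^{is_j(m_{j-1}^2-m_j^2)}\,ds$.

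The crux is a smoothing estimate for $w$. At the basic level $Vu(s,\cdot)\in H^{(\min\{\sigma_0,r_0\})-}$ by the one--dimensional bilinear Sobolev rule, and the periodic Duhamel operator gains half a derivative, so $w(t,\cdot)\in H^{(\min\{\sigma_0,r_0\}+1/2)-}$; note that $\min\{\sigma_0,r_0\}+1/2>\sigma_0$ exactly when $\sigma_0-1/2<r_0$, which is the hypothesis of part $(2)$. For the sharper information needed below I would analyse the iterated oscillatory integrals directly, separating in each chain the \emph{resonant} steps $m_j=\pm m_{j-1}$, which only contribute a power of $t$, from the \emph{non-resonant} ones, which contribute a gain $(m_{j-1}^2-m_j^2)^{-1}$. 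A chain made entirely of resonant steps keeps $m_j\in\{\pm k\}$ throughout, with coefficient a product $\widehat V(\pm 2k)\cdots\widehat V(\pm 2k)\,\widehat f(\pm k)$, i.e.\ a Fourier coefficient of an iterated convolution of dilated reflections of $V-\bar V\in H^{r_0-}\subset L^1(\mathbb T)$ with the $BV$ function $f$; since $\|g*h\|_{BV}\le\|g\|_{L^1}\|h\|_{BV}$ beats the $t^n/n!$ from the time integrals, the sum of these pieces is $e^{it\partial_{xx}}\Phi(t,\cdot)$ with $\Phi(t,\cdot)\in BV$, locally bounded in $t$. Any chain with a non-resonant step carries a factor $((m-\ell)(m+\ell))^{-1}$, worth two derivatives off the near-diagonal and one derivative on it (while $\sum_{j\ne 0}|\widehat V(j)||j|^{-1}<\infty$ controls the remaining sum); balancing $|\widehat V(k)|\lesssim\langle k\rangle^{-r_0+}$ against $f\in H^{\sigma_0-}$ then puts the remainder $R(t,\cdot):=w(t,\cdot)-e^{it\partial_{xx}}\Phi(t,\cdot)$ into $H^{(\sigma_0+r_0)-}$. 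I expect the real difficulty of the whole argument to sit here: making these bilinear/convolution estimates uniform in $t$, dealing with the partially resonant chains, and summing the series.

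Granting this, I would invoke the classical fact (Oskolkov; Kapitanski--Rodnianski) that $h\in BV$ implies $e^{it\partial_{xx}}h\in\bigcap_{\alpha<1/2}C^\alpha(\mathbb T)$ for a.e.\ $t\notin\pi\mathbb Q$, together with two elementary bounds: $g\in C^\alpha(\mathbb T)$ gives $\overline{\dim}_B(\mathrm{graph}\,g)\le 2-\alpha$, and if in addition $\|g(\cdot+h_j)-g\|_{L^2}\gtrsim h_j^{\beta}$ along some $h_j\downarrow 0$ then $\overline{\dim}_B(\mathrm{graph}\,g)\ge 2+\alpha-2\beta$, the latter from $\|g(\cdot+h)-g\|_{L^2}^2\le\|g(\cdot+h)-g\|_{L^\infty}\|g(\cdot+h)-g\|_{L^1}$ and the usual count of $h$--boxes covering a graph. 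For $(1)$: $e^{it\partial_{xx}}f\in C^{1/2-}$ for a.e.\ $t$ and $w(t,\cdot)\in H^{(\sigma_0+r_0)-}\subset C^{(\min\{1/2,\,\sigma_0+r_0-1/2\})-}$ for every $t$, so $u(t,\cdot)$ lies in $C^{(\min\{1/2,\,\sigma_0+r_0-1/2\})-}$ for a.e.\ $t$; being also bounded, $\mathrm{Re}\,u$, $\mathrm{Im}\,u$ and $|u|^2$ have upper Minkowski graph dimension $\le 2-\min\{1/2,\,\sigma_0+r_0-1/2\}=\max\{3/2,\,5/2-\sigma_0-r_0\}$. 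For $(2)$: by the decomposition $e^{it\partial_{xx}}\Phi(t,\cdot)\in C^{1/2-}$ for a.e.\ $t$, and $\Phi(t,\cdot)=f+(\text{a function in }H^{\sigma_0+})$ since every convolution correction gains regularity, so $\Phi(t,\cdot)\notin H^{\sigma_0+}$ and, by unitarity of $e^{it\partial_{xx}}$ on $H^s$, neither is $e^{it\partial_{xx}}\Phi(t,\cdot)$; the hypothesis $\sigma_0-1/2<r_0$ is what guarantees that the correction $w(t,\cdot)$ is genuinely smoother than the free evolution, lying in $H^{\sigma_0+}$ and, together with the $e^{it\partial_{xx}}(BV)$ structure of its rough part, no rougher than $C^{1/2-}$, so that $u(t,\cdot)\notin H^{\sigma_0+}$ while $u(t,\cdot)\in C^{1/2-}$ for a.e.\ $t$. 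Since $f\notin H^{\sigma_0+}$, the Littlewood--Paley characterization of Sobolev norms yields $\|u(t,\cdot+h_j)-u(t,\cdot)\|_{L^2}\gtrsim h_j^{\sigma_0+\epsilon}$ along some $h_j\downarrow 0$ for every $\epsilon>0$; letting $\alpha\uparrow 1/2$ and $\epsilon\downarrow 0$ in the lower bound gives $\overline{\dim}_B\ge 5/2-2\sigma_0$ for the graphs of $\mathrm{Re}\,u$ and $\mathrm{Im}\,u$, which finishes $(2)$.
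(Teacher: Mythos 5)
Your proposal and the paper agree on the overall strategy for part $(1)$: split $u(t,\cdot)=e^{it(\partial_{xx}-\widehat V(0))}f+\mathcal P(t,\cdot)$, show the Duhamel part is smooth enough, invoke $e^{it\partial_{xx}}f\in C^{1/2-}$ for a.e.\ $t$, and use the H\"older upper bound (Lemma~\ref{l:upper}). However, the engine behind the smoothing -- which you yourself flag as ``the real difficulty'' -- is treated in the paper by a bilinear estimate in the Bourgain space $X^{s,b}$ (Proposition~\ref{prop1}) together with local well-posedness and iteration in time (Sections~\ref{sec3}--\ref{sec4}), yielding exactly $\mathcal P(t,\cdot)\in H^{\sigma_0+a}$ for all $a<\min\{r_0,1/2\}$ (Proposition~\ref{thm2}). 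You instead propose a Dyson-series expansion with a resonant/non-resonant splitting of frequency chains; this is a genuinely different (and considerably more delicate) route, and as written it is not a proof: you never sum the series, never make the convolution bounds uniform in $t$, and never actually establish that $\mathcal P(t,\cdot)\in H^{(\sigma_0+r_0)-}$ (which, incidentally, claims more than the paper's $H^{\sigma_0+\min\{r_0,1/2\}-}$ and would need justification). For $(1)$ this extra strength is irrelevant to the final exponent, but the gap in the proof of smoothing is real.

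For part $(2)$ there are two concrete problems. First, you decompose $u(t,\cdot)=e^{it\partial_{xx}}\Phi(t,\cdot)+R(t,\cdot)$ with $\Phi(t,\cdot)\in BV$ coming from the fully resonant chains, and then apply the Oskolkov/CET result ``$e^{it\partial_{xx}}g\in C^{1/2-}$ for a.e.\ $t\notin\pi\mathbb Q$''. But that statement is for a \emph{fixed} $g\in BV$, and $\Phi(t,\cdot)$ varies with $t$; you would need a uniform version (e.g.\ a fixed full-measure set of times that works simultaneously for the one-parameter family $\{\Phi(t,\cdot)\}$, with bounds uniform in $t$), and nothing in the proposal supplies this. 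The paper avoids the issue entirely: since $\mathcal P(t,\cdot)\in H^{\sigma_0+\min\{r_0,1/2\}-}\hookrightarrow B^{\sigma_0+\min\{r_0,1/2\}-}_{1,\infty}$ for \emph{every} $t$, and the hypothesis $\sigma_0-\tfrac12<r_0$ is exactly what makes $\sigma_0+\min\{r_0,1/2\}>2\sigma_0-\tfrac12$, the Duhamel term lies strictly above the roughness threshold and needs no further decomposition. Second, you deduce the $L^2$-modulus lower bound from ``$u(t,\cdot)\notin H^{\sigma_0+}$''. That only shows at least one of $\operatorname{Re}u(t,\cdot)$, $\operatorname{Im}u(t,\cdot)$ is rough, whereas the theorem asserts the bound for \emph{both} graphs. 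The paper handles this by citing \cite[Lemma 3.2]{CET}, which guarantees that for a.e.\ $t$ neither $\operatorname{Re}e^{it\partial_{xx}}f$ nor $\operatorname{Im}e^{it\partial_{xx}}f$ lies in $H^{\sigma_0+}$; your argument needs this (or an equivalent) and currently omits it. Your modulus-of-continuity lemma is fine -- it is essentially the Deliu--Jawerth bound $\overline{\dim}_B\geq 2-s$ for $g\notin B^{s+}_{1,\infty}$ combined with the interpolation $\|g(\cdot+h)-g\|_{L^2}^2\le\|g(\cdot+h)-g\|_{L^\infty}\|g(\cdot+h)-g\|_{L^1}$, which is a modulus-of-continuity form of the embedding $B^{s_1}_{1,\infty}\cap B^{s_2}_{\infty,\infty}\subset H^{\frac{s_1+s_2}{2}-}$ used in the paper -- so that piece is a valid alternative formulation.
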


\begin{rem}
In the above theorems, the potential space $H^+$ contains several interesting functions. 
A classical result by Haslam-Jones \cite{HJ} states that the Fourier coefficients of the unbounded function\footnote{A special example of \eqref{e:unbddpot} is the Coulomb type potential $|x|^{-\nu}, 0<\nu<1$, which is of significant interest particularly in atomic and molecular physics, solid-state physics, and particle physics.}
\begin{equation}\label{e:unbddpot}
	V(x)=\frac{\phi(x)}{|x|^\nu (\log |\kappa/x|)^a}, \quad -\pi \le x< \pi,
\end{equation}
where $\phi\in BV$, $0<\nu<1$, $\kappa>\pi$ and $a\in\mathbb R$, satisfies 
\[	|\widehat V(0)|\lesssim 1 \quad \text{and} \quad \widehat{V}(k)= O\big( |k|^{\nu-1} (\log|k|)^{-a} \big), \ \ k\neq 0. 	\]
Therefore, for every $a\in \mathbb R$ and $\nu<\frac12$, the unbounded potential \eqref{e:unbddpot} belongs to $H^+$. Moreover, since $C^\alpha \hookrightarrow H^s$ for $0<s<\alpha\le 1$ (\cite[Theorem 1.13]{MS-book}), we conclude that the space $H^+$ includes all classes of H\"older continuous periodic functions.
\end{rem}

\subsubsection*{Organization} In Section \ref{sec2}, we present the crucial smoothing estimate (Proposition \ref{thm2}) for the Duhamel part of the initial value problem \eqref{PS}, as well as the relevant results on the free Schr\"odinger evolution that we require. We then prove Theorems \ref{thm1} and \ref{t:dim}.
The remaining sections are devoted to proving the smoothing estimate. In Section \ref{sec3}, we first establish bilinear estimates in the Bourgain space for the potential term $Vu$ in \eqref{PS}. Building on this, we establish the well-posedness of \eqref{PS} in Section \ref{sec4}, and subsequently prove the smoothing estimate.

\subsubsection*{Notation}
In our use of inequalities, we adopt the symbol $C$ to represent a positive constant that may vary in each occurrence. For $A,B>0$, we express $A\lesssim B$ when $A\le CB$ with some constant $C>0$. We also use the notation $A\approx B$ to signify that $A\lesssim B$ and $B\lesssim A$.

\section{Proofs of the theorems} \label{sec2}

In this section, we prove Theorems \ref{thm1} and \ref{t:dim}. 
Let us first rearrange the equation in \eqref{PS} as follows: 
\begin{equation*}
	iu_t + \big(\partial_{xx}-\widehat{V}(0) \big)u = R(V,u),
\end{equation*}
where $R(V,u)$ is the function on $\mathbb R\times \mathbb T$ defined by
\[	R(V,u)(t,x)= \big(V(x)-\widehat V(0)\big) u(t,x).	\]
Then, by Duhamel's formula, the solution to the initial value problem \eqref{PS} can be equivalently written as
\[	u(t,x) = e^{it (\partial_{xx}-\widehat{V}(0))}f(x) -i\int_0^t e^{i(t-t')(\partial_{xx}-\widehat{V}(0))}R(V,u)(t',x)dt'. \]

We shall prove that the Duhamel term 
\begin{equation}\label{e:inhom}
 	\mathcal P(t,x):= -i\int_0^t e^{i(t-t')(\partial_{xx}-\widehat{V}(0))}R(V,u)(t',x)dt'
\end{equation}
is in fact continuous on $\mathbb R\times \mathbb T$. For the purpose we obtain the following smoothing property for $\mathcal{P}(t,\cdot)$, which is the key ingredient in this paper. We provide the proof of the property in Section \ref{subsec3}. 
\begin{prop}\label{thm2}
Suppose that $V\in H^r$ and $f\in H^s$ for $r\geq 0$ and $0<s<r+1$.  If 
\[	a\le r \quad \text{and} \quad a<\min\{1+r-s, 1/2\},	\] 
then $\mathcal{P}(t,\cdot)\in H^{s+a}$ for every $t\in \mathbb R$, and is continuous in the time variable $t$. 
\end{prop}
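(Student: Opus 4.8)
The plan is to run the Fourier restriction norm method. Write $S(t):=e^{it(\partial_{xx}-\widehat V(0))}$ for the modified propagator, fix $b\in(1/2,1)$ close to $\tfrac12$, and work in the Bourgain space $X^{s,b}$ adapted to $S(t)$, whose modulation weight $\langle\tau+k^2+\widehat V(0)\rangle$ is comparable to $\langle\tau+k^2\rangle$ with constants depending only on $\|V\|_{H^r}$. By the well-posedness to be established in Section~\ref{sec4}, the solution $u$ of \eqref{PS} belongs to $C(\mathbb R;H^s)$ and, on every interval $I$ of a fixed length $\delta=\delta(\|V\|_{H^r},b)>0$ (the local lifespan is uniform because \eqref{PS} is \emph{linear} in $u$), to the restricted space $X^{s,b}_I$. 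The whole argument then rests on the bilinear smoothing estimate
\[
\big\|(V-\widehat V(0))\,w\big\|_{X^{s+a,\,b-1}}\lesssim\|V\|_{H^r}\,\|w\|_{X^{s,b}},
\]
valid whenever $a\le r$ and $a<\min\{1+r-s,\tfrac12\}$, whose proof is the content of Section~\ref{sec3}; granting it, what remains is assembly.

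Fix an interval $I$ as above. The standard energy inequality for the Duhamel operator in Bourgain spaces, $\big\|\psi(t/\delta)\int_{t_\ast}^t S(t-t')G(t')\,dt'\big\|_{X^{s+a,b}}\lesssim\|G\|_{X^{s+a,b-1}}$ (with $\psi$ a smooth cutoff equal to $1$ on $I$ and $t_\ast\in I$), applied to $G=R(V,u)=(V-\widehat V(0))u$ and combined with the bilinear estimate (for $w=u$) and the embedding $X^{s+a,b}\hookrightarrow C(I;H^{s+a})$ (valid since $b>1/2$), shows that $t\mapsto\int_{t_\ast}^t S(t-t')R(V,u)(t')\,dt'$ is continuous into $H^{s+a}$ on $I$. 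Covering $\mathbb R$ by intervals $I_j=[t_j,t_{j+1}]$ of length $\delta$ and writing, for $t\in I_j$, $\mathcal P(t)=S(t-t_j)\mathcal P(t_j)-i\int_{t_j}^t S(t-t')R(V,u)(t')\,dt'$, an induction on $j$ starting from $\mathcal P(0)=0$ propagates the statement ``$\mathcal P(t,\cdot)\in H^{s+a}$, with $t\mapsto\mathcal P(t,\cdot)$ continuous into $H^{s+a}$'' to all $t\in\mathbb R$, which is the assertion of the proposition. (The subtraction of $\widehat V(0)$ is essential, not cosmetic: it removes exactly the diagonal Fourier mode of the multiplication operator by $V$, transferring it into $S(t)$, and the diagonal interaction $k=k_1$ is the one carrying no dispersive gain, so the bilinear estimate would fail without this removal.)

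It remains to indicate the proof of the bilinear estimate---the heart of the matter. Passing to space--time Fourier coefficients and using that $V$ is $t$-independent with vanishing mean after subtraction, one has $\widetilde{(V-\widehat V(0))w}(\tau,k)=\sum_{k_1\ne k}\widehat V(k-k_1)\,\widetilde w(\tau,k_1)$; writing $F(\tau,k)=\langle k\rangle^s\langle\tau+k^2\rangle^{b}\widetilde w(\tau,k)$, so that $\|F\|_{\ell^2_kL^2_\tau}=\|w\|_{X^{s,b}}$, the estimate reduces to bounding, uniformly in $\tau$, the $\ell^2\to\ell^2$ operator norm of the kernel
\[
M_{k,k_1}=\mathbf 1_{\{k\ne k_1\}}\,\frac{\langle k\rangle^{s+a}\,\langle\tau+k^2\rangle^{b-1}}{\langle k_1\rangle^{s}\,\langle\tau+k_1^2\rangle^{b}}\,\big|\widehat V(k-k_1)\big|
\]
by $\|V\|_{H^r}$. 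This is handled by a (weighted) Schur test whose essential input is the resonance identity $(\tau+k^2)-(\tau+k_1^2)=(k-k_1)(k+k_1)$, forcing $\max\{\langle\tau+k^2\rangle,\langle\tau+k_1^2\rangle\}\gtrsim\langle(k-k_1)(k+k_1)\rangle$ and thereby letting modulation decay be traded for frequency decay. Decomposing dyadically in the relative sizes of $|k|$, $|k_1|$, $|k-k_1|$, three regimes are decisive and pin down the hypotheses on $a$: the near-diagonal resonant interactions ($|k|\sim|k_1|$ with $|k-k_1|$ small), where the derivative loss $\langle k\rangle^{s+a}/\langle k_1\rangle^{s}\sim\langle k\rangle^{a}$ must be absorbed by the gain $\langle(k-k_1)(k+k_1)\rangle^{1-b}$, forcing $a\le1-b$ and hence $a<1/2$ upon letting $b\downarrow\tfrac12$; the unique exact resonance $k_1=-k$ (with $k\ne0$), at which the modulation gain vanishes and the kernel reduces to $\langle k\rangle^{a}\,|\widehat V(2k)|\,\langle\tau+k^2\rangle^{-1}\lesssim\langle k\rangle^{a-r}\|V\|_{H^r}$, forcing $a\le r$; and the remaining high-potential-frequency interactions, which, after pairing $|\widehat V(k-k_1)|$ with its $H^r$-weight by Cauchy--Schwarz and spending the available modulation gain, close exactly under $a<1+r-s$. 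The main obstacle is precisely this balancing: squeezing enough resonance gain out to defeat $\langle k\rangle^{s+a}/\langle k_1\rangle^{s}$ in every regime while keeping $b$ strictly above $1/2$, so that the embedding $X^{s+a,b}\hookrightarrow C_tH^{s+a}_x$ used above survives.
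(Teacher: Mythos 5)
Your proposal follows essentially the same route as the paper: reduce to the free propagator (you via an adapted Bourgain space with modulation weight $\langle\tau+k^2+\widehat V(0)\rangle$, the paper via the gauge change $v=e^{it\widehat V(0)}u$, which are equivalent), establish local well-posedness with a $\delta$ uniform in the data by linearity, feed the bilinear smoothing estimate into the Duhamel/Bourgain machinery on each interval of length $\delta$, and propagate the gain in $x$-regularity and $t$-continuity over $\mathbb R$ by induction (the paper telescopes, which is the same idea). Your sketch of the bilinear estimate also captures the correct mechanisms — the resonance identity, the $k_1=-k$ term forcing $a\le r$, the near-diagonal cap $a<\tfrac12$, and pairing $\widehat V$ with its $H^r$ weight — even if the paper implements this as a Hilbert--Schmidt bound $\sup_{\tau,k}\sum_l M(k,l,\tau)^2<\infty$ via Cauchy--Schwarz rather than a Schur test, a distinction of bookkeeping rather than substance.
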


For  the evolution $e^{it(\partial_{xx}-\widehat{V}(0) )}$, we make use of the following known result due to Oskolkov \cite[Proposition 14 and page 390]{O}:

\begin{prop}\label{Tal1}
Suppose that  $g\in BV$. 
\begin{enumerate}
[leftmargin=0.7cm]
	\item[$(i)$] If $t\notin\pi\mathbb{Q}$, then $e^{it\partial_{xx}}g$ is a continuous function of $x$. If $g$ has at least one discontinuity on $\mathbb T$ and $t\in\pi\mathbb{Q}$, then $e^{it\partial_{xx}}g$ necessarily contains discontinuities.
	\item[$(ii)$] If $g$  is continuous, then $e^{it\partial_{xx}}g$ is jointly continuous in temporal and spatial variables. 
\end{enumerate}
\end{prop}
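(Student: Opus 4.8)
The plan is to treat the two time regimes separately, following Oskolkov's scheme: the rational times by an exact algebraic identity, the irrational times by cancellation in oscillatory sums. Throughout, write $e^{it\partial_{xx}}g(x)=\sum_{k\in\mathbb Z}\widehat g(k)\,e^{i(kx-tk^2)}$; since $g\in BV$ its distributional derivative is a finite measure $\mu$, so $\widehat g(k)=\widehat\mu(k)/(ik)$ and $|\widehat g(k)|\lesssim \|g\|_{BV}|k|^{-1}$. This decay (and the sizes of the jumps of $g$) are the only features of $BV$ that will be used for the spatial statements.

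\textbf{Rational times.} Write $t=\pi p/q$ with $\gcd(p,q)=1$. The phase $k\mapsto e^{-itk^2}=e^{-i\pi pk^2/q}$ is a periodic function of $k$ (period dividing $2q$), so splitting the Fourier series of $e^{it\partial_{xx}}g$ into residue classes modulo $2q$ and applying finite Fourier inversion on $\mathbb Z/2q\mathbb Z$ yields the exact identity
\[
	e^{it\partial_{xx}}g(x)=\sum_{\ell=0}^{2q-1} c_{\ell}\, g\big(x-\tfrac{\pi\ell}{q}\big),\qquad c_\ell=\tfrac{1}{2q}\sum_{j=0}^{2q-1}e^{-i\pi pj^2/q}\,e^{ij\pi\ell/q},
\]
the $c_\ell$ being normalized quadratic Gauss sums. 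Three consequences follow at once. First, $e^{it\partial_{xx}}g$ is a finite linear combination of translates of $g$, hence of bounded variation and in particular bounded. Second, if $g$ is continuous then each translate is continuous, so $e^{it\partial_{xx}}g$ is continuous — this is the rational-time case of part $(\romannum 2)$. Third, for the discontinuity claim, observe that $e^{-it\partial_{xx}}=e^{i(-t)\partial_{xx}}$ has the same form (as $-t\in\pi\mathbb Q$), hence maps every continuous function of bounded variation to a continuous function; if $e^{it\partial_{xx}}g$ were continuous, then writing $g=e^{-it\partial_{xx}}\big(e^{it\partial_{xx}}g\big)$ (valid on the Fourier side) would force $g$ continuous, contradicting the hypothesis that $g$ has a discontinuity.

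\textbf{Irrational times.} Here lies the work. By a dyadic decomposition of the Fourier series together with summation by parts, using $|\widehat g(k)|\lesssim|k|^{-1}$, the continuity of $\sum_k\widehat g(k)e^{i(kx-tk^2)}$ is reduced to sufficiently strong bounds — of type $o(N)$, and in fact a bit more in order to close the sum over dyadic blocks — on the partial quadratic exponential sums $W_{M,N}(x,t)=\sum_{M\le|k|\le N}e^{i(kx-tk^2)}$, uniformly in $x$ and locally uniformly in $t$, whenever $t/\pi\notin\mathbb Q$. This is obtained from Weyl's inequality (equivalently van der Corput's second-derivative test) together with Dirichlet/continued-fraction approximations of $t/\pi$: irrationality supplies denominators $q\to\infty$ with $|t/\pi-p/q|\le q^{-2}$, which is exactly what beats the weight $|k|^{-1}$ and makes the series converge uniformly in $x$; hence $e^{it\partial_{xx}}g\in C(\mathbb T)$. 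For part $(\romannum 2)$, when in addition $g\in C(\mathbb T)$, feeding the modulus of continuity of $g$ into these same estimates yields a modulus of continuity for $e^{it\partial_{xx}}g$ that is uniform in $t$, i.e. equicontinuity of $\{e^{it\partial_{xx}}g\}_{t\in\mathbb R}$ in $C(\mathbb T)$; combining this with continuity of $t\mapsto e^{it\partial_{xx}}g(x)$ for each fixed $x$ — again the Weyl-sum estimate when $t$ is irrational, and the Gauss-sum identity above (a finite combination of translates, depending continuously on $t$) when $t$ is rational — upgrades pointwise convergence to joint continuity on $\mathbb R\times\mathbb T$.

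\textbf{Main obstacle.} The rational-time assertions are soft once the Gauss-sum identity is written down. The substantive difficulty is the irrational-time exponential-sum estimate and, above all, making it uniform enough near rational times to get genuine joint continuity rather than mere separate continuity: a continuous function of bounded variation need not have an absolutely summable Fourier series, so one cannot bypass the oscillation of $e^{i(kx-tk^2)}$ and must quantify the cancellation through the arithmetic of the continued fraction expansion of $t/\pi$. This is the technical core of Oskolkov's argument, which the present paper invokes as a black box.
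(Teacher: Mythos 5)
The paper does not prove Proposition \ref{Tal1}; it quotes it verbatim as a known result, citing Oskolkov \cite[Proposition 14 and p.~390]{O}, and then uses it as a black box. There is therefore no internal proof for your argument to be compared against: what you have written is a reconstruction of the cited source, which is a legitimately different task from what the paper does.

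As a reconstruction it is sound in outline. The rational-time half is correct and essentially complete: the Gauss-sum identity $e^{it\partial_{xx}}g=\sum_\ell c_\ell\,g(\cdot-\pi\ell/q)$, the conclusion that this is a finite combination of translates (hence bounded and $BV$), the propagation of continuity, and the invertibility trick $g=e^{-it\partial_{xx}}\bigl(e^{it\partial_{xx}}g\bigr)$ for the discontinuity claim are all valid (the last step is safe because a $BV$ function with a genuine jump cannot agree a.e.\ with a continuous function). For irrational times you correctly name the two ingredients — $\widehat g(k)=O(1/|k|)$ from bounded variation, plus Weyl/van der Corput control of $\sum e^{i(kx-tk^2)}$ governed by continued-fraction approximations of $t/\pi$ — and you are honest that the quantitative estimate closing the dyadic sum is the technical core and is deferred to Oskolkov. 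The one place I would push back is the argument sketched for part $(\romannum{2})$: asserting a single $t$-independent modulus of continuity and then separate continuity in $t$ (via Weyl sums away from $\pi\mathbb Q$ and the Gauss-sum identity on $\pi\mathbb Q$) is both stronger than needed and not obviously achievable by ``feeding in $\omega_g$''; the Gauss-sum representation has $\sim q$ terms of size $\sim q^{-1/2}$, so there is no naive uniform-in-$t$ bound. The route Oskolkov actually takes, and the cleaner one, is to show that for $g\in C\cap BV$ the symmetric partial sums $\sum_{|k|\le N}\widehat g(k)e^{i(kx-tk^2)}$ converge uniformly in $(t,x)\in\mathbb R\times\mathbb T$; each partial sum is jointly continuous, so the uniform limit is jointly continuous. (Your equicontinuity statement is then a consequence of this uniform convergence, not a stepping stone toward it.)
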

\begin{rem}\label{rem1}
The quantization results of Berry and Klein \cite{BK} and Taylor \cite[(2.3)]{T3} state that if $t\in\pi\mathbb Q$ then $e^{it\partial_{xx}}g$ is a linear sum of finitely many translates of $g$ (see \cite[Theorem 2.14]{ET-book}). Hence, in Proposition \ref{Tal1} $(i)$, $e^{it\partial_{xx}}g\in BV$ whenever $t\in \pi \mathbb {Q}$, so the possible discontinuities in this case are at most countable.
\end{rem}

\begin{proof}[Proof of Theorem \ref{thm1}]
By the assumption $V\in  H^+$ it is possible to pick a sufficiently small number $\alpha\in(0,\frac18)$ such that $V\in H^{2\alpha}$. Since $f\in BV$  it follows that $f\in H^{\frac{1}{2}-}$, and in particular, $f\in H^{\frac12-\alpha}$. Thus, making use of Proposition \ref{thm2} (with $r=2\alpha$, $s=\frac12-\alpha$ and $a=2\alpha$), we conclude that $\mathcal P(t,x) \in C_tH_x^{\frac12 +\alpha}$. Moreover, from the Sobolev embedding
\begin{equation}\label{e:sob-emb}
	H^s \hookrightarrow C^{s-\frac{1}{2}} \quad \text{for} \quad s>1/2,
\end{equation} 
it follows that 
\begin{equation}\label{e:joint-conti}
	\mathcal P(t,x) \in C_tC_x^\alpha.
\end{equation}

For $t\notin \pi \mathbb Q$, Proposition \ref{Tal1} $(i)$ shows that 
\begin{equation}\label{e:evolution}
	e^{-it(\partial_{xx}-\widehat V(0))}f = e^{it \widehat V(0)}e^{-it\partial_{xx}}f 
\end{equation}
is continuous. Hence, combined with \eqref{e:joint-conti}, it follows from \eqref{e:inhom} that 
\begin{equation}\label{e:sol}
	u(t,\cdot)=e^{-it(\partial_{xx}-\widehat V(0))}f(\cdot)+\mathcal P(t,\cdot)
\end{equation}
is also continuous.

If $t\in \pi \mathbb Q$ and $f$ is discontinuous, then it follows from Proposition \ref{Tal1} $(i)$ and Remark \ref{rem1} that  the evolution \eqref{e:evolution} is of bounded variation and contains (at most countable) discontinuities. Thus, \eqref{e:joint-conti} tells us that the solution \eqref{e:sol} is a discontinuous bounded function with at most countable discontinuity. 

If $f$ is continuous, then the function \eqref{e:evolution} is also continuous by Proposition \ref{Tal1} $(ii)$. Hence, combining with \eqref{e:joint-conti} we see that the solution \eqref{e:sol} is jointly continuous on $\mathbb R\times \mathbb T$.
\end{proof}

Now we prove Theorem \ref{t:dim}.  Let us first recall the inhomogeneous Besov space $B^s_{p,q}$, where 
$1\le p,q\le\infty$ and $s\ge 0$, on the periodic domain $\mathbb T$
and its properties that we need.
It is well-known\footnote{See, for example, \cite{ST87}  (Remark 4 on p. 164 and Theorem (i), (v) on pp. 168--169).} that $B_{2,2}^s=H^s$ for every $s$ and $B^\alpha_{\infty, \infty} = C^\alpha$ for $\alpha\in (0,\infty)\setminus \mathbb N$.  By complex interpolation between $B_{1,\infty}^{s_1}$ and 
$B_{\infty,\infty}^{s_2}$ (see \cite[Theorem 6.4.5]{BL1}) and H\"older's inequality, we have, for $s_1\neq s_2$,
\begin{equation}\label{e:inter}
	B_{1,\infty}^{s_1}\cap B_{\infty,\infty}^{s_2} \subset B_{2,\infty}^{\frac{s_1+s_2}{2}} \hookrightarrow B_{2,2}^{\frac{s_1+s_2}2 -} = H^{\frac{s_1+s_2}{2} -}.	
\end{equation}

We also make use of the following lemmas due to Chousionis--Erdo\u{g}an--Tzirakis \cite{CET} and Deliu--Jawerth \cite{DJ}. 
\begin{lem}[\cite{CET}] \label{Linear}
Let $\frac12\le \sigma_0< \frac{3}{4}$ and suppose $g\in BV\setminus H^{\sigma_0+}$. Then, for almost all $t\in \mathbb R\setminus \pi\mathbb Q$, we have $e^{it\partial_{xx}}g \in C^{\frac{1}{2}-} \setminus B_{1,\infty}^{2\sigma_0-\frac{1}{2}+}$.
\end{lem}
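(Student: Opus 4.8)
The plan is to prove the two assertions separately: that $e^{it\partial_{xx}}g\in C^{1/2-}$, which uses only $g\in BV$, and that $e^{it\partial_{xx}}g\notin B_{1,\infty}^{2\sigma_0-1/2+}$, which will follow from the first one together with the hypothesis $g\notin H^{\sigma_0+}$. Since $\pi\mathbb Q$ is null, ``almost all $t\in\mathbb R\setminus\pi\mathbb Q$'' is the same as ``almost all $t\in\mathbb R$''; and since $k^2\in\mathbb Z$, the free evolution is $2\pi$-periodic in $t$, so it suffices to work on $t\in[0,2\pi]$.

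For the first assertion I would begin from the identity $\widehat g(k)=\widehat{dg}(k)/(ik)$ for $k\neq0$, valid for $g\in BV$ with $dg$ its derivative measure, so that $|\widehat{dg}(k)|\lesssim\mathrm{Var}(g)$ uniformly in $k$. This gives, for $j\ge1$ (the $P_0$-piece being trivial),
\[
	P_je^{it\partial_{xx}}g(x)=\frac1{2\pi}\int_{\mathbb T}\Big(\sum_k\frac{\phi(2^{-j}k)}{ik}\,e^{i(k(x-z)-k^2t)}\Big)\,dg(z).
\]
The key input is the classical metric estimate for quadratic Weyl sums: for almost every $t$ and every $\epsilon>0$,
\[
	\sup_{y\in\mathbb T}\Big|\sum_{1\le k\le K}e^{i(ky-k^2t)}\Big|\lesssim_{t,\epsilon}K^{1/2+\epsilon}.
\]
This comes from Weyl differencing, which bounds the square of the left-hand side, uniformly in $y$, by a constant times $K+\sum_{1\le h\le K}\min(K,\mathrm{dist}(th/\pi,\mathbb Z)^{-1})$, together with the standard fact --- a consequence of the metric theory of continued fractions --- that $\sum_{1\le h\le K}\min(K,\mathrm{dist}(\beta h,\mathbb Z)^{-1})\lesssim_{\beta,\epsilon}K^{1+\epsilon}$ for almost every $\beta$. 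Summing by parts in $k$ against the weight $a_k:=\phi(2^{-j}k)/(ik)$, for which $|a_{k+1}-a_k|\lesssim2^{-2j}$ on $|k|\sim2^j$, then yields $\|P_je^{it\partial_{xx}}g\|_{L^\infty(\mathbb T)}\lesssim_{t,\epsilon}\mathrm{Var}(g)\,2^{(-1/2+\epsilon)j}$ for almost every $t$. Hence $e^{it\partial_{xx}}g\in B_{\infty,\infty}^{1/2-\epsilon}=C^{1/2-\epsilon}$ for every $\epsilon>0$, and intersecting over $\epsilon$ gives $e^{it\partial_{xx}}g\in C^{1/2-}$ for almost every $t$.

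For the second assertion, fix $t$ in the full-measure set on which the first one holds and suppose, for contradiction, that $e^{it\partial_{xx}}g\in B_{1,\infty}^{2\sigma_0-1/2+\epsilon_0}$ for some $\epsilon_0>0$, that is, $\|P_je^{it\partial_{xx}}g\|_{L^1(\mathbb T)}\lesssim2^{-(2\sigma_0-1/2+\epsilon_0)j}$ for all $j$. Applying $\|h\|_{L^2}^2\le\|h\|_{L^1}\|h\|_{L^\infty}$ to $h=P_je^{it\partial_{xx}}g$, using Plancherel to note that $\|P_je^{it\partial_{xx}}g\|_{L^2}^2=\sum_k|\phi(2^{-j}k)|^2|\widehat g(k)|^2$ is independent of $t$, and inserting the bound $\|P_je^{it\partial_{xx}}g\|_{L^\infty}\lesssim_{t,\epsilon}2^{(-1/2+\epsilon)j}$ from the first assertion, I would obtain
\[
	\sum_k|\phi(2^{-j}k)|^2|\widehat g(k)|^2\lesssim_{t,\epsilon}2^{(-1/2+\epsilon)j}\cdot2^{-(2\sigma_0-1/2+\epsilon_0)j}=2^{-(2\sigma_0+\epsilon_0-\epsilon)j}.
\]
Taking $\epsilon=\epsilon_0/2$ and carrying out the routine dyadic summation (the supports of $\phi(2^{-j}\cdot)$ overlap finitely and cover $\mathbb Z$) forces $\sum_k\langle k\rangle^{2\sigma_0+\epsilon_0/4}|\widehat g(k)|^2<\infty$, that is, $g\in H^{\sigma_0+\epsilon_0/8}\subset H^{\sigma_0+}$, contradicting $g\notin H^{\sigma_0+}$. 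Hence $e^{it\partial_{xx}}g\notin B_{1,\infty}^{2\sigma_0-1/2+}$ for almost every $t$, and combined with the first assertion this gives the lemma.

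I expect the main obstacle to be the metric Weyl-sum estimate underlying the first assertion: it is precisely the sharp exponent $1/2+\epsilon$ there --- rather than the weaker exponents furnished by naive $L^4$- or decoupling-type moment bounds, which would yield only a much smaller H\"older exponent --- that produces the gain of essentially half a derivative responsible for the value $3/2$ appearing in Theorem~\ref{t:dim}. The remaining ingredients, namely summation by parts, Plancherel, the elementary inequality $\|h\|_2^2\le\|h\|_1\|h\|_\infty$, and the dyadic bookkeeping, are routine; and the reduction to almost every $t\in\mathbb R$ is harmless since $\pi\mathbb Q$ is null --- indeed the conclusion genuinely fails on $\pi\mathbb Q$, where $e^{it\partial_{xx}}g$ is a finite linear combination of translates of $g\in BV$.
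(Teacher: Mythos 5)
The paper does not prove Lemma~\ref{Linear}; it simply cites it from \cite{CET} (with the $C^{\frac12-}$ half going back to Oskolkov \cite{O} and Rodnianski \cite{R2}). Your sketch correctly reconstructs that cited argument --- the a.e.-$t$ Weyl-sum bound $O_{t,\epsilon}(K^{1/2+\epsilon})$ plus summation by parts against the weight $\phi(2^{-j}k)/(ik)$ gives $\|P_j e^{it\partial_{xx}}g\|_{L^\infty}\lesssim_{t,\epsilon}2^{(-1/2+\epsilon)j}$ and hence membership in $C^{\frac12-}$, and the interpolation $\|h\|_{L^2}^2\le\|h\|_{L^1}\|h\|_{L^\infty}$ applied to $P_j e^{it\partial_{xx}}g$ together with the $t$-invariance of the $L^2$ piece is exactly how \cite{CET} rules out $B_{1,\infty}^{2\sigma_0-\frac12+}$ when $g\notin H^{\sigma_0+}$ --- so the proof is correct and matches the source.
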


\begin{lem}[\cite{DJ}]\label{FDL}
Let $0<s<1$. Assume that $f\colon\mathbb{T}\to \mathbb{R}$ is continuous and $f\notin B_{1,\infty}^{s+}$. Then the upper Minkowski dimension of the graph of $f$ is at least  $2-s$.
\end{lem}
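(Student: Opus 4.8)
The plan is to reduce the desired lower bound on the upper Minkowski dimension of the graph $\Gamma_f:=\{(x,f(x)):x\in\mathbb T\}$ to a quantitative lower bound on the $L^1$ modulus of smoothness of $f$, and then to extract that lower bound from the hypothesis $f\notin B_{1,\infty}^{s+}$. Write $\omega_1(f,\delta):=\sup_{|h|\le\delta}\|f(\cdot+h)-f(\cdot)\|_{L^1(\mathbb T)}$. First I would invoke the classical characterization of the Besov space $B_{1,\infty}^\sigma$, for $0<\sigma<1$, by first differences, namely $\|f\|_{B_{1,\infty}^\sigma}\approx\|f\|_{L^1(\mathbb T)}+\sup_{0<\delta\le1}\delta^{-\sigma}\omega_1(f,\delta)$ (see \cite{BL1}). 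Since $f$ is continuous, hence in $L^1(\mathbb T)$, the assumption $f\notin B_{1,\infty}^{s+}$ says that $\sup_{0<\delta\le1}\delta^{-(s+\epsilon)}\omega_1(f,\delta)=\infty$ for every $\epsilon\in(0,1-s)$; because $\omega_1(f,\cdot)$ is bounded and non-decreasing while $\delta\mapsto\delta^{-(s+\epsilon)}$ is bounded on $[\delta_0,1]$ for each $\delta_0>0$, this forces $\limsup_{\delta\to0^+}\delta^{-(s+\epsilon)}\omega_1(f,\delta)=\infty$. Hence there is a sequence $\delta_n\downarrow0$ with $\omega_1(f,\delta_n)\ge\delta_n^{\,s+\epsilon}$ for all large $n$.

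Next I would set up the standard dictionary between box counting and oscillations. For a mesh value $\delta=2\pi/M$ with $M\in\mathbb N$, partition $\mathbb T$ into the $M$ consecutive intervals $I_0,\dots,I_{M-1}$ of length $\delta$, put $V_f(\delta):=\sum_{j}\operatorname{osc}_{I_j}(f)$ with $\operatorname{osc}_I(f):=\sup_If-\inf_If$, and let $N(\delta)$ denote the number of squares of the $\delta$-mesh of $\mathbb R^2$ meeting $\Gamma_f$, so that $\overline{\dim}_B\Gamma_f=\limsup_{\delta\to0}\log N(\delta)/\log(1/\delta)$. Two elementary inequalities drive the proof. On the one hand, over the column $I_j$ the continuous function $f$ attains every value in an interval of length $\operatorname{osc}_{I_j}(f)$, so the graph meets at least $\operatorname{osc}_{I_j}(f)/\delta$ squares of that column, and summing over the disjoint columns gives $N(\delta)\ge\delta^{-1}V_f(\delta)$. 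On the other hand, if $x\in I_j$ and $|h|\le\delta$ then $x$ and $x+h$ both lie in the length-$3\delta$ arc $I_{j-1}\cup I_j\cup I_{j+1}$ (indices mod $M$), so by subadditivity of the oscillation over adjacent intervals, $\|f(\cdot+h)-f(\cdot)\|_{L^1}\le\delta\sum_j\operatorname{osc}_{I_{j-1}\cup I_j\cup I_{j+1}}(f)\le3\delta\,V_f(\delta)$, whence $\omega_1(f,\delta)\le3\delta\,V_f(\delta)$. Combining the two gives the key estimate $N(\delta)\ge\omega_1(f,\delta)/(3\delta^2)$ for every $\delta$ of the form $2\pi/M$.

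Finally I would feed the blow-up sequence into this estimate. Given $\delta_n\downarrow0$ with $\omega_1(f,\delta_n)\ge\delta_n^{\,s+\epsilon}$, I set $M_n:=\lfloor2\pi/\delta_n\rfloor$ and $\delta_n':=2\pi/M_n$; then $\delta_n\le\delta_n'\le2\delta_n$ once $\delta_n\le\pi$, so by monotonicity $\omega_1(f,\delta_n')\ge\omega_1(f,\delta_n)\ge\delta_n^{\,s+\epsilon}\ge2^{-(s+\epsilon)}(\delta_n')^{s+\epsilon}$, and therefore $N(\delta_n')\gtrsim(\delta_n')^{\,s+\epsilon-2}$. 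Taking logarithms and letting $n\to\infty$,
\[
	\overline{\dim}_B\Gamma_f\ \ge\ \limsup_{n\to\infty}\frac{\log N(\delta_n')}{\log(1/\delta_n')}\ \ge\ 2-s-\epsilon ,
\]
and since $\epsilon\in(0,1-s)$ is arbitrary we conclude $\overline{\dim}_B\Gamma_f\ge2-s$.

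The only genuinely delicate point --- the step I would be most careful about --- is making the box-counting/oscillation correspondence precise: aligning the spatial mesh with the partition $\{I_j\}$, checking that for the \emph{fixed} grid the number of squares met over $I_j$ is still at least $\operatorname{osc}_{I_j}(f)/\delta$, and confirming that replacing an arbitrary $\delta_n$ by the nearby mesh value $\delta_n'=2\pi/M_n$ costs only a bounded factor that washes out in the $\limsup$. The remaining ingredients --- the modulus-of-smoothness characterization of $B_{1,\infty}^\sigma$, subadditivity of the oscillation, and the final logarithmic bookkeeping --- are routine.
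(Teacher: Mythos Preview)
The paper does not actually prove this lemma; it is quoted from Deliu--Jawerth \cite{DJ} and used as a black box. Your self-contained argument is correct and follows the same strategy as the original source: characterize $B_{1,\infty}^\sigma$ (for $0<\sigma<1$) by the $L^1$ first-order modulus of smoothness, relate the modulus to the column-oscillation sum $V_f(\delta)$, and then bound the box count from below by $\delta^{-1}V_f(\delta)$. The two points you flagged as delicate are indeed the only ones requiring care, and both are handled correctly: the subadditivity $\mathrm{osc}_{I_{j-1}\cup I_j\cup I_{j+1}}(f)\le \mathrm{osc}_{I_{j-1}}(f)+\mathrm{osc}_{I_j}(f)+\mathrm{osc}_{I_{j+1}}(f)$ holds because $f$ is continuous (so consecutive intervals share a common function value), and the passage from an arbitrary scale $\delta_n$ to a dyadic-type mesh scale $\delta_n'=2\pi/M_n$ only changes constants, which disappear in the $\limsup$.
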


\begin{proof}[Proof of Theorem \ref{t:dim}]
Let us first prove the part $(i)$. By the definition of $\sigma_0$ and $r_0$ it is clear that 
\[	f\in H^{\sigma_0-} \quad \text{and} \quad V\in H^{r_0-}.	\]
By applying Proposition \ref{thm2} (with $s=\sigma_0-\epsilon$ and $r=r_0-2\epsilon$ for an infinitesimal $0<\epsilon\ll 1$) we have 
\begin{equation}\label{e:p-sob}
	\mathcal P(t,\cdot)\in H^{\sigma_0+a}, \quad \forall t\in \mathbb R,
\end{equation}
whenever 
\[	a<\min\{r_0, 1+r_0-\sigma_0, 1/2\} =\min\{r_0, 1/2\}=:a_0.	\]
Hence, by the Sobolev embedding \eqref{e:sob-emb} we have
\begin{equation}\label{e:P-hol}
	\mathcal P(t,\cdot)\in C^{\sigma_0+a-\frac12} , \quad \forall t\in\mathbb R,
\end{equation}
for every $a<a_0$. On the other hand, by Lemma \ref{Linear}, we have
\begin{equation}\label{e:evol-hol}
	e^{it\partial_{xx}}f \in C^{\frac12-} \quad \text{for almost all} \ \  t\in \mathbb R\setminus \pi \mathbb Q.
\end{equation}
Hence, it follows from \eqref{e:sol}, \eqref{e:P-hol} and \eqref{e:evol-hol} that
\begin{equation}\label{e:u-hol}
	u(t,\cdot) \in C^{\min\{\frac12, \sigma_0+a-\frac12\}-}  \quad \text{for almost all} \ \  t\in \mathbb R\setminus \pi \mathbb Q
\end{equation}
for every $a<a_0$. It is obvious that the same statement is still valid for $\mathrm{Re}\,u(t,\cdot)$, $\mathrm{Im}\,u(t,\cdot)$ and $|u(t,\cdot)|^2$ in place of $u(t,\cdot)$ in \eqref{e:u-hol}. We now use the following classical result on the upper Minkowski dimension for H\"older continuous functions (for a proof we refer the reader to \cite[Corollary 11.2]{F}):
\begin{lem}\label{l:upper}
Let $0\le \alpha\le 1$. If a function $f\colon \mathbb T\to \mathbb R$ is $C^\alpha$, then the upper Minkowski dimension of the graph of $f$ is at most $2-\alpha$.  
\end{lem}

Indeed, by Lemma \ref{l:upper}, the upper Minkowski dimension of the graphs of $\mathrm{Re}\,u(t,\cdot)$, $\mathrm{Im}\,u(t,\cdot)$ and $|u(t,\cdot)|^2$ is at most  
\[	2-\min\{1/2, \sigma_0+a_0-1/2\}= \max\{3/2, 5/2 -\sigma_0-a_0\}	\]
for almost all $t\in \mathbb R\setminus \pi \mathbb Q$. We notice further that if $a_0=1/2$, then $5/2 -\sigma_0-a_0=2-\sigma_0\in (5/4,3/2]$, so the maximum is equal to $3/2$. Therefore, we obtain $(i)$.

Let us now prove the part $(ii)$. Since $f\notin H^{\sigma_0+}$, we see that for almost every $t$, neither $\mathrm{Re\,} e^{it\partial_{xx}}f$ nor $\mathrm{Im\,} e^{it\partial_{xx}}f$ belong to $H^{\sigma_0 +}$ (see \cite[Lemma 3.2]{CET}). It follows from \eqref{e:evol-hol} and the embedding $B_{1,\infty}^{s_1}\cap B_{\infty,\infty}^{s_2} \subset H^{\frac{s_1+s_2}{2} -}$ (see \eqref{e:inter}) that, for almost all $t\in \mathbb R\setminus 2\pi\mathbb Q$, both the real and the imaginary parts of $e^{it\partial_{xx}}f$ do not belong to $B_{1,\infty}^{2\sigma_0-\frac12+}$.  On the other hand, by \eqref{e:p-sob} we see $\mathcal P(t,\cdot)\in B_{1,\infty}^{\sigma_0+\min\{r_0,\frac12 \}-}$ for all $t\in \mathbb R$. Combining these we have
\[	u(t,\cdot)= \underbrace{\mathrm{Re}\,(e^{it(\partial_{xx}-\widehat V(0))}f)}_{\notin B^{2\sigma_0-\frac12+}_{1,\infty}\text{ for a.e. }t} 
+i\underbrace{\mathrm{Im}\,(e^{it(\partial_{xx}-\widehat V(0))}f)}_{\notin B^{2\sigma_0-\frac12+}_{1,\infty}\text{ for a.e. }t}
+\underbrace{\mathcal P(t,\cdot)}_{\in B^{\sigma_0+\min\{r_0,\frac12\}- }_{1,\infty} \text{ for all }t}. 	\]
From this we conclude that if $2\sigma_0-\frac12<\sigma_0+\min\{r_0,\frac12\}$, that is, either $r_0\ge \frac12$ or $\sigma_0-\frac12 <r_0<\frac12$,  then for almost all $t\notin 2\pi \mathbb Q$ neither the real nor the imaginary parts of $u(t,\cdot)$ belong to $B^{2\sigma_0-\frac12+}_{1,\infty}$. By Lemma \ref{FDL}, we conclude that the graphs of real and imaginary parts of $u(t,\cdot)$ have Minkowski dimension $\ge 2-(2\sigma_0-\frac12)=\frac 52-2\sigma_0$.
\end{proof}

\section{Bilinear estimate in the Bourgain space} \label{sec3}
In this section, we obtain bilinear estimates for $R(V,u)$  in the Bourgain space, which are essential in proving Proposition \ref{thm2}. 
\subsection{The Bourgain space} 
For $s,b\in\mathbb{R}$, we denote by  $X^{s,b}$ the closure of the set of Schwartz functions $\mathcal S(\mathbb R)_t C^\infty (\mathbb T)_x$ under the norm
\[	\|u\|_{X^{s,b}} := \| \langle k\rangle^s\langle\tau+k^2\rangle^b\widetilde{u}(\tau,k)\|_{L^2_{\tau}l^2_{k}(\mathbb R\times \mathbb Z)},	\]
where $\widetilde{u}$ denotes the space-time Fourier transform of $u$ defined by
\[	\widetilde{u}(\tau,k)=\int_{\mathbb{R}} e^{-i\tau t}\widehat{u}(t,k) dt
	=\frac{1}{2\pi} \int_{\mathbb{R}}\int^{2\pi}_{0} e^{-i(\tau t+kx)} u(t,x) dx dt, \quad (\tau, k)\in \mathbb R\times \mathbb Z.	\]
The $X^{s,b}$-space is also called the \emph{Bourgain space} or \emph{dispersive Sobolev space}. We also define, for a closed interval $I\subset \mathbb{R}$, the restricted space $X^{s,b}_I$ as the Banach space of functions on $I\times \mathbb T$ equipped with the norm
\[	\|u\|_{X^{s,b}_I} = \inf\big\{\|w\|_{X^{s,b}} \colon w\vert_{I\times\mathbb T}=u \big\}.	\]
We notice that $X^{s,b}_\mathbb R=X^{s,b}$.

The followings are some basic properties of the Bourgain space that we need to prove the smoothing estimate (Proposition \ref{thm2}). The proofs of the properties, which we omit, can be obtained by routine adaptation of those of \cite[Corollary 2.10, Lemma 2.11 and Proposition 2.12]{T2} in the spatially periodic setting and time translation argument.  
\begin{lem}\label{bourgain}
Let $s\in\mathbb{R}$, $b>\frac12$ and $I\subset \mathbb R$ a closed interval. Then $X_I^{s,b} \hookrightarrow C(I; H^s)$ and
\[	\sup_{t\in I}\|u(t,\cdot)\|_{H^s}\leq C \|u\|_{X_I^{s,b}},	\]
where $C$ is a constant depending only on $b$.
\end{lem}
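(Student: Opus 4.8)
The plan is to establish the embedding $X^{s,b}_I \hookrightarrow C(I; H^s)$ together with the quantitative bound $\sup_{t\in I}\|u(t,\cdot)\|_{H^s} \le C\|u\|_{X^{s,b}_I}$ by reducing to the case $I = \mathbb R$ and then to a Fourier-analytic estimate. First I would observe that, by the definition of the restricted norm $\|u\|_{X^{s,b}_I}$ as an infimum over extensions, it suffices to prove the full-line statement: if $w \in X^{s,b}$, then $w \in C(\mathbb R; H^s)$ with $\sup_{t}\|w(t,\cdot)\|_{H^s} \le C\|w\|_{X^{s,b}}$; restricting $w$ to $I$ and taking the infimum over extensions then yields the claim on $I$.

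Next, for the full-line statement, I would pass to the profile $v(t,x) := e^{-it\partial_{xx}}w(t,x)$ (equivalently, undo the free flow), so that $\widetilde v(\tau,k) = \widetilde w(\tau - k^2, k)$ and hence $\|v\|_{X^{s,b}_{\mathrm{free}}}$-type quantities match: concretely $\|\langle k\rangle^s \langle\tau\rangle^b \widetilde v(\tau,k)\|_{L^2_\tau \ell^2_k} = \|w\|_{X^{s,b}}$. Since $w(t,\cdot) = e^{it\partial_{xx}}v(t,\cdot)$ and $e^{it\partial_{xx}}$ is an isometry on $H^s$, it is enough to show $v \in C(\mathbb R; H^s)$ with $\sup_t \|v(t,\cdot)\|_{H^s} \le C\|v\|$. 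For each fixed $k$, the function $t \mapsto \widehat v(t,k)$ is the inverse Fourier transform (in $\tau$) of $\widetilde v(\cdot,k)$, and because $b > 1/2$ the weight $\langle\tau\rangle^{-b}$ lies in $L^2_\tau$, so by Cauchy–Schwarz $\widetilde v(\cdot,k) \in L^1_\tau$ with $\|\widetilde v(\cdot,k)\|_{L^1_\tau} \le C_b \big(\int \langle\tau\rangle^{2b}|\widetilde v(\tau,k)|^2 d\tau\big)^{1/2}$. This gives that $t\mapsto \widehat v(t,k)$ is continuous and bounded, with $\sup_t |\widehat v(t,k)|^2 \le C_b^2 \int \langle\tau\rangle^{2b}|\widetilde v(\tau,k)|^2 d\tau$; multiplying by $\langle k\rangle^{2s}$, summing in $k$, and using Fubini/Minkowski yields $\sup_t \|v(t,\cdot)\|_{H^s}^2 \le C_b^2 \|v\|^2 = C_b^2\|w\|_{X^{s,b}}^2$. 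Continuity of $t\mapsto v(t,\cdot) \in H^s$ then follows by dominated convergence (the dominating function being $\langle k\rangle^{2s}\sup_t|\widehat v(t,k)|^2$, which is summable), and transferring back through the isometry $e^{it\partial_{xx}}$ gives $w \in C(\mathbb R; H^s)$.

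The main obstacle, and the only genuinely delicate point, is the interchange of limits needed for continuity: one must justify that $t\mapsto \widehat w(t,k) = e^{-it k^2}\widehat v(t,k)$ (or directly $t\mapsto w(t,\cdot)$) is continuous into $H^s$, not merely bounded. This is handled by the dominated convergence argument above — for each $k$, continuity in $t$ of $\widehat v(t,k)$ is classical since $\widetilde v(\cdot,k)\in L^1$, and the uniform-in-$t$ summable majorant $\langle k\rangle^{2s}\sup_t|\widehat v(t,k)|^2$ controls the tail — but it does require the strict inequality $b > 1/2$ in an essential way, exactly as in the cited references. Since the excerpt notes that this lemma is obtained by routine adaptation of \cite[Corollary 2.10, Lemma 2.11 and Proposition 2.12]{T2} to the periodic setting together with a time-translation argument to pass from $\mathbb R$ to a general interval $I$, I would present the proof in that spirit: state the reduction to $I = \mathbb R$, carry out the Cauchy–Schwarz-in-$\tau$ estimate, and invoke dominated convergence for continuity, omitting the fully routine verifications.
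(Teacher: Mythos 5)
Your proof is correct and essentially reconstructs the standard argument. The paper itself does not prove this lemma but defers to \cite[Corollary 2.10, Lemma 2.11, Proposition 2.12]{T2}, and your write-up is exactly the argument one extracts from those references: reduce to $I=\mathbb R$ by taking the infimum over extensions, conjugate by the free group so that $\widetilde v(\tau,k)=\widetilde w(\tau-k^2,k)$ flattens the weight $\langle\tau+k^2\rangle^b$ into $\langle\tau\rangle^b$, apply Cauchy--Schwarz in $\tau$ (using $b>\frac12$ to make $\langle\tau\rangle^{-b}\in L^2_\tau$) to bound $\sup_t|\widehat v(t,k)|$ mode by mode, sum against $\langle k\rangle^{2s}$, and obtain $H^s$-continuity by dominated convergence. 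One point worth spelling out: transferring continuity from $v$ back to $w(t)=e^{it\partial_{xx}}v(t)$ uses not only that $e^{it\partial_{xx}}$ is an isometry for each fixed $t$ but also the strong continuity of the unitary group on $H^s$; write $w(t)-w(t_0)=e^{it\partial_{xx}}\bigl(v(t)-v(t_0)\bigr)+\bigl(e^{it\partial_{xx}}-e^{it_0\partial_{xx}}\bigr)v(t_0)$ and handle the two terms separately. Alternatively, you can avoid the conjugation altogether and run the Cauchy--Schwarz argument directly on $\widehat w(t,k)$, since $\|\langle\tau+k^2\rangle^{-b}\|_{L^2_\tau}=\|\langle\tau\rangle^{-b}\|_{L^2_\tau}$ by translation invariance.
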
	

\begin{lem}\label{lem2}
Let $s\in\mathbb{R}$, $-\frac12<b<b^\prime<\frac12$ and $I$ a closed interval of length $\delta$. Then
\[	\|u\|_{X^{s,b}_I} \leq C\delta^{b^\prime-b}\|u\|_{X^{s,b^\prime}_I},	\]
where the constant $C$ depends only on $b$ and $b'$.
\end{lem}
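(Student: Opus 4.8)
The plan is to deduce the estimate from a fixed time-cutoff estimate on the whole line, then to strip off the spatial variable and reduce everything to a one-dimensional statement about multiplication by a rescaled bump function on Sobolev spaces in $t$, which I would then settle by a direct computation on the Fourier side.

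\emph{Reduction to a cutoff estimate.} Since the $X^{s,b}$-norm is translation-invariant in $t$, I may take $I=[-\delta/2,\delta/2]$. Fix once and for all a function $\psi\in C_c^\infty(\mathbb R)$ with $\psi\equiv 1$ on $[-1/2,1/2]$ and $\operatorname{supp}\psi\subset[-1,1]$, and set $\psi_\delta(t)=\psi(t/\delta)$. Given $u$ on $I\times\mathbb T$, pick a near-optimal extension $w\in X^{s,b'}$ with $\|w\|_{X^{s,b'}}\le\|u\|_{X^{s,b'}_I}+\varepsilon$; then $\psi_\delta w$ is again an extension of $u$ (because $\psi_\delta\equiv 1$ on $I$), so $\|u\|_{X^{s,b}_I}\le\|\psi_\delta w\|_{X^{s,b}}$, and it suffices to prove $\|\psi_\delta w\|_{X^{s,b}}\lesssim\delta^{b'-b}\|w\|_{X^{s,b'}}$ for all $w\in X^{s,b'}$. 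Conjugating by the free propagator — that is, replacing $w(t,\cdot)$ by $e^{-it\partial_{xx}}w(t,\cdot)$, which is an isometry of $X^{s,b}$ onto $H^b_tH^s_x$ and commutes with multiplication by $\psi_\delta$ — and using that $H^b_t$ and $H^s_x$ are both $L^2$-based, this in turn reduces to the scalar estimate
\[
\|\psi_\delta g\|_{H^b(\mathbb R)}\ \lesssim\ \delta^{b'-b}\,\|g\|_{H^{b'}(\mathbb R)},\qquad -\tfrac12<b\le b'<\tfrac12,\ \ 0<\delta\le 1,
\]
with a constant depending only on $b,b'$ and the fixed $\psi$. (For $\delta\ge1$ there is nothing to prove, since $H^{b'}\hookrightarrow H^b$ and $\delta^{b'-b}\ge1$.)

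\emph{The scalar estimate.} On the Fourier side $\widehat{\psi_\delta g}(\tau)=\int_{\mathbb R}\delta\,\widehat\psi(\delta(\tau-\sigma))\,\widehat g(\sigma)\,d\sigma$, so the claim is an $L^2\to L^2$ bound, with operator norm $\lesssim\delta^{b'-b}$, for the integral operator with kernel $K(\tau,\sigma)=\langle\tau\rangle^b\,\delta|\widehat\psi(\delta(\tau-\sigma))|\,\langle\sigma\rangle^{-b'}$. Since $\widehat\psi$ is Schwartz, a dyadic decomposition in $|\tau-\sigma|$ reduces matters, up to a summable factor, to the model kernel $\langle\tau\rangle^b\,\delta\,\langle\sigma\rangle^{-b'}\mathbf 1_{\{|\tau-\sigma|\lesssim1/\delta\}}$. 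I would then split the $(\tau,\sigma)$-plane into the low region $\{|\sigma|\lesssim1/\delta\}$ and the high region $\{|\sigma|\gtrsim1/\delta\}$. On the low region, Cauchy–Schwarz in $\sigma$ (using $\int_{|\sigma|\lesssim1/\delta}\langle\sigma\rangle^{-2b'}\,d\sigma\lesssim\delta^{2b'-1}$, where $b'<1/2$ enters) followed by Fubini in $\tau$ (using $\int_{|\tau|\lesssim1/\delta}\langle\tau\rangle^{2b}\,d\tau\lesssim\delta^{-2b-1}$, where $b>-1/2$ enters) gives the bound $\delta^{b'-b}\|g\|_{H^{b'}}$. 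On the high region, $|\tau-\sigma|\lesssim1/\delta\ll|\sigma|$ forces $\langle\tau\rangle\approx\langle\sigma\rangle\gtrsim1/\delta$, whence $\langle\tau\rangle^b\langle\sigma\rangle^{-b'}\approx\langle\sigma\rangle^{b-b'}\lesssim\delta^{b'-b}$ (here $b\le b'$ enters), and what remains is convolution against $\delta\,\mathbf 1_{\{|\cdot|\lesssim1/\delta\}}$, which is bounded on $L^2$ by Young's inequality.

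\emph{Main obstacle.} The one delicate point is the weight bookkeeping near the endpoints $b,b'=\pm\tfrac12$: the two elementary integral estimates above are exactly where the hypotheses $b'<1/2$ and $b>-1/2$ get consumed, and — especially when $b<0$ — one must be careful not to discard the decay of $\langle\tau\rangle^b$ by a crude bound such as $\|\cdot\|_{H^b}\le\|\cdot\|_{L^2}$; it is precisely that decay, integrated over the low-frequency window of width $\sim1/\delta$, that supplies the extra gain $\delta^{-b}$ over the $b=0$ case. Everything else — the reduction steps, the Schwartz tails, and Young's/Schur's inequalities — is a routine adaptation of the corresponding arguments in \cite{T2}.
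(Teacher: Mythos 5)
Your proof is correct and takes essentially the route the paper relies on: the paper defers this lemma to a ``routine adaptation'' of \cite[Lemma~2.11]{T2}, whose proof proceeds by precisely the reduction you describe (time translation, cutoff by $\psi_\delta$, conjugation by the free propagator to strip the spatial variable, and reduction to the one-variable estimate $\|\psi_\delta g\|_{H^b(\mathbb R)}\lesssim\delta^{b'-b}\|g\|_{H^{b'}(\mathbb R)}$). Your dyadic Schur/Hilbert--Schmidt computation for the scalar estimate is a correct, self-contained way to carry out that last step, with the hypotheses $b'<\tfrac12$, $b>-\tfrac12$, and $b\le b'$ each consumed exactly where they must be.
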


\begin{lem}\label{lem1}
Let $s\in\mathbb{R}$, $\frac12<b\leq 1$ and $I=[t_0, t_0+\delta]$ for $t_0\in\mathbb R$ and $0<\delta\le 1$. Then, for $t\in I$, we have
\begin{gather*}
	\| e^{i(t-t_0)\partial_{xx}}f\|_{X^{s,b}_I} \leq C\|f\|_{H^{s}}, \\
	\Big \|\int^t_{t_0} e^{i(t-t')\partial_{xx}}F(t',\cdot)dt' \Big\|_{X^{s,b}_I} \leq C\| F\|_{X^{s,b-1}_I},
\end{gather*}
where  $C$ depends only on $b$. 
\end{lem}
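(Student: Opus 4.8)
The final statement to prove is Lemma \ref{lem1}, which collects the standard homogeneous and inhomogeneous linear estimates for the free Schr\"odinger propagator $e^{it\partial_{xx}}$ in the restricted Bourgain space $X^{s,b}_I$.

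\medskip

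The plan is to establish these estimates first on the whole line (i.e.\ for $X^{s,b}=X^{s,b}_{\mathbb R}$) by a direct computation on the space-time Fourier side, then transfer to the interval $I=[t_0,t_0+\delta]$ by the usual time-truncation and time-translation argument. For the homogeneous estimate, I would first reduce to $t_0=0$: since $e^{i(t-t_0)\partial_{xx}}f = e^{it\partial_{xx}}\big(e^{-it_0\partial_{xx}}f\big)$ and $\|e^{-it_0\partial_{xx}}f\|_{H^s}=\|f\|_{H^s}$, translation invariance of the $X^{s,b}$-norm in $t$ lets us assume $t_0=0$. Next, pick a cutoff $\psi\in C_0^\infty(\mathbb R)$ with $\psi\equiv 1$ on $[-1,1]$ (so $\psi\equiv 1$ on $I$ when $0<\delta\le 1$); then $\psi(t)e^{it\partial_{xx}}f$ is an extension of $e^{it\partial_{xx}}f|_I$, so it suffices to bound $\|\psi(t)e^{it\partial_{xx}}f\|_{X^{s,b}}$. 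A direct computation gives $\widetilde{\psi(\cdot)e^{i\cdot\partial_{xx}}f}(\tau,k)=\widehat\psi(\tau+k^2)\widehat f(k)$, so that
\[
	\|\psi(t)e^{it\partial_{xx}}f\|_{X^{s,b}}^2 = \sum_k \langle k\rangle^{2s}|\widehat f(k)|^2 \int_{\mathbb R}\langle\sigma\rangle^{2b}|\widehat\psi(\sigma)|^2\,d\sigma,
\]
after the substitution $\sigma=\tau+k^2$, and since $\widehat\psi$ is Schwartz the $\sigma$-integral is a finite constant depending only on $b$ (and the fixed cutoff). This yields the first inequality with $C=C(b)$.

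\medskip

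For the inhomogeneous (Duhamel) estimate I would use the same strategy. Reduce to $t_0=0$ by translation; then write $v(t):=\int_0^t e^{i(t-t')\partial_{xx}}F(t',\cdot)\,dt'$ and multiply by a cutoff $\psi$. The standard lemma (adapted from \cite[Proposition 2.12]{T2}) states that for $\tfrac12<b\le 1$ one has $\|\psi\int_0^t e^{i(t-t')\partial_{xx}}F\,dt'\|_{X^{s,b}} \lesssim \|F\|_{X^{s,b-1}}$; on the Fourier side one decomposes $\widehat\psi$ into the part near the origin and the tail, the tail contributes a term controlled by $\langle\tau+k^2\rangle^{b-1}$ as needed, and the main term is handled via the Taylor expansion of $e^{it\partial_{xx}}$ (or, equivalently, the identity giving $\widehat\psi(\tau+k^2)$ times a bounded multiplier). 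Finally, to pass from $X^{s,b}$ to the restricted norm $X^{s,b}_I$: by definition $\|u\|_{X^{s,b}_I}$ is the infimum over extensions, and $\psi(t)e^{it\partial_{xx}}f$ (resp.\ $\psi(t)v(t)$) is such an extension because $\psi\equiv 1$ on $I$; moreover $\|F\|_{X^{s,b-1}}$ on the right should itself be taken as an extension of $F|_I$, so one applies the whole-line estimate to an arbitrary extension $\widetilde F$ of $F$ and then takes the infimum, giving $\|F\|_{X^{s,b-1}_I}$ on the right-hand side.

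\medskip

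I do not expect any serious obstacle here: everything is standard harmonic analysis on the Fourier transform side, and the paper explicitly says these are routine adaptations of \cite{T2}. The only mildly delicate point is the inhomogeneous estimate for the endpoint $b=1$ and the bookkeeping of the cutoff in passing to the restricted space — one must make sure the cutoff $\psi$ is chosen once and for all (depending only on the fixed length bound $\delta\le 1$, hence effectively a universal choice), so that the constants genuinely depend only on $b$ as claimed. Since $0<\delta\le1$ is assumed, a single cutoff equal to $1$ on $[-1,1]$ works uniformly for all admissible $I$, so this causes no trouble.
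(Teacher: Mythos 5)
Your proof is correct and follows the same route the paper intends: the paper omits the proof and states that Lemma~\ref{lem1} follows by routine adaptation of \cite[Corollary 2.10, Lemma 2.11, Proposition 2.12]{T2} together with a time-translation argument, which is precisely what you carry out (reduce to $t_0=0$ by translation invariance of the $X^{s,b}$-norm, multiply by a fixed cutoff $\psi\equiv 1$ on $[-1,1]\supset[0,\delta]$, compute $\widetilde{\psi e^{i\cdot\partial_{xx}}f}(\tau,k)=\widehat\psi(\tau+k^2)\widehat f(k)$ for the homogeneous bound, invoke the standard Duhamel lemma for the inhomogeneous bound, and pass to $X^{s,b}_I$ by taking infima over extensions). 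Your closing remark that a single fixed cutoff works uniformly because $0<\delta\le 1$, so that $C=C(b)$ genuinely, correctly addresses the one bookkeeping point worth flagging.
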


\subsection{Bilinear estimate for $R(V,u)$} \label{subsec1} 
In this section we estimate $R(V,u)=(V-\widehat V(0)) u$ in the $X^{s,b}$-space.

\begin{prop}\label{prop1}
Let $r\ge 0$ and $0< s<1+r$, and suppose that  
\[	a\le r, \quad a<1+r-s \quad \text{and} \quad a<1/2.	\]
Then, for every interval $I$,
\begin{equation}\label{e:bil-est}
	\|R(V,u)\|_{X_I^{s+a,b'-1}}\lesssim \|V\|_{H^r}\|u\|_{X_I^{s,b}}
\end{equation}
provided that $b, b' \in (\frac12, \frac12+\epsilon)$ for an $\epsilon>0$ small enough.
\end{prop}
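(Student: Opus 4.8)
The plan is to work entirely on the Fourier side and reduce the bilinear estimate to a weighted $\ell^2$ convolution inequality. After passing to extensions on all of $\mathbb R$ (using the definition of the restricted norm), write $\widetilde{R(V,u)}(\tau,k) = \sum_{k_1} \widehat V(k-k_1)\widetilde u(\tau,k_1)$, with the $k_1=k$ term removed by the subtraction of $\widehat V(0)$ — this removal is what makes the low-frequency part of $V$ harmless and should be used to guarantee $|k-k_1|\ge 1$ in the relevant regime. Setting $F(\tau,k) = \langle k\rangle^{s}\langle\tau+k^2\rangle^{b}\widetilde u(\tau,k)$ and $G(k)=\langle k\rangle^{r}\widehat V(k)$, the claim \eqref{e:bil-est} is equivalent to the boundedness of
\[
	\Big\| \langle k\rangle^{s+a}\langle\tau+k^2\rangle^{b'-1} \sum_{k_1\neq k} \frac{\langle k-k_1\rangle^{-r}G(k-k_1)}{\langle k_1\rangle^{s}} \int \frac{F(\lambda,k_1)}{\langle\lambda+k_1^2\rangle^{b}}\,d\lambda \Big\|_{L^2_\tau \ell^2_k} \lesssim \|G\|_{\ell^2}\|F\|_{L^2_\tau\ell^2_{k_1}}.
\]
By Cauchy--Schwarz in $\lambda$ (absorbing $\langle\lambda+k_1^2\rangle^{-b}$ since $b>1/2$) and then in $k_1$, and by duality against a function $H(\tau,k)\in L^2_\tau\ell^2_k$, matters reduce to bounding a trilinear sum $\sum_{k,k_1}\int d\tau$ with weight $\langle k\rangle^{s+a}\langle k_1\rangle^{-s}\langle k-k_1\rangle^{-r}\langle\tau+k^2\rangle^{b'-1}$ against $\|F\|\,\|G\|\,\|H\|$; the key is to show the multiplier
\[
	M(k,k_1,\tau) := \langle k\rangle^{s+a}\langle k_1\rangle^{-s}\langle k-k_1\rangle^{-r}\langle\tau+k^2\rangle^{b'-1}\langle\tau+k_1^2\rangle^{-b}
\]
is such that $\sum_{k_1}\int M\, d\tau$, or an appropriate Schur-type pairing, is bounded uniformly.

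The main step is a case analysis based on the relative sizes of the three frequencies $k$, $k_1$, $k-k_1$ together with the resonance relation. The crucial algebraic identity is $(\tau+k_1^2) - (\tau+k^2) = k_1^2 - k^2 = -(k-k_1)(k+k_1)$, so the largest of $\langle\tau+k^2\rangle$ and $\langle\tau+k_1^2\rangle$ is $\gtrsim \langle k-k_1\rangle\langle k+k_1\rangle$. In the \emph{high-high} regime where $|k|\approx|k_1|\gg 1$ (so $k+k_1$ could still be large or small), we gain $\langle k-k_1\rangle^{-r}$ directly from $V$, the factor $\langle k\rangle^{s+a}\langle k_1\rangle^{-s}\approx \langle k\rangle^{a}$ is controlled by spending part of the modulation gain, and we use $b'-1 < -1/2$ to make the $\tau$-integral summable; the condition $a<1/2$ and $b'>1/2$ are exactly what is needed here so that $\langle k\rangle^{a}$ can be dominated by $\langle\tau+k^2\rangle^{1-b'}$ via the resonance identity when $|k+k_1|\gtrsim |k|$, and when $|k+k_1|\ll |k|$ one uses instead that $\langle k-k_1\rangle\gtrsim |k|$ so that $\langle k-k_1\rangle^{-r}$ and the leftover of $\langle k\rangle^{s+a}\langle k_1\rangle^{-s}$ combine favorably provided $a\le r$. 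In the \emph{high-low} regime $|k_1|\ll|k|$, one has $|k-k_1|\approx|k|$, so $\langle k\rangle^{s+a}\langle k-k_1\rangle^{-r}\approx \langle k\rangle^{s+a-r}$ and we must beat $\langle k_1\rangle^{-s}$ using a modulation gain of size $\langle k\rangle$ to leading order; this is where $a<1+r-s$ enters, ensuring $s+a-r-(1-b')<0$ after spending one unit of modulation (since $b'$ can be taken arbitrarily close to $1/2$). The \emph{low-high} regime $|k|\ll|k_1|$ forces $|k-k_1|\approx|k_1|$ and is the easiest: all weights involving $k_1$ combine to $\langle k_1\rangle^{-s-r}$ with $s+r>0$, and the $k_1$-sum converges after Cauchy--Schwarz.

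I expect the main obstacle to be the bookkeeping in the high-high interaction, specifically handling the subcase where $|k+k_1|$ is small (near-resonance), because there the modulation gain from the identity $(\tau+k^2)-(\tau+k_1^2)=-(k-k_1)(k+k_1)$ degenerates and one must extract all the decay from $\langle k-k_1\rangle^{-r}$ alone against the growth $\langle k\rangle^{a}$ — this is precisely why the hypothesis $a\le r$ (rather than merely $a<1/2$ and $a<1+r-s$) cannot be dropped. A clean way to organize this is to dyadically decompose in $\langle k\rangle\sim N$, $\langle k_1\rangle\sim N_1$, $\langle k-k_1\rangle\sim L$ and the modulations $\langle\tau+k^2\rangle\sim M$, $\langle\tau+k_1^2\rangle\sim M_1$, reduce each piece to an $L^2\times L^2\to L^2$ estimate via the standard bound $\|(u_1)(u_2)\|_{L^2_{t,x}(\mathbb T\times\mathbb R)}$ in terms of the number of lattice points in the intersection of two parabolas (or, more simply, Cauchy--Schwarz plus counting), and then sum the resulting geometric series in $N,N_1,L,M,M_1$ — the exponent conditions in the statement are exactly the thresholds at which every such series converges. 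This mirrors the treatment of analogous bilinear estimates in \cite{T2} and \cite{ET-book}, adapted to the periodic setting and to the fact that one of the two factors (the potential $V$) is time-independent, which actually simplifies the modulation analysis since $\widetilde V(\tau,k)=2\pi\widehat V(k)\delta(\tau)$ collapses one of the $\tau$-integrations.
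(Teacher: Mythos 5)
Your proposal correctly identifies the key ingredients (a multiplier bound after passing to the Fourier side, the resonance identity $(\tau+k^2)-(\tau+k_1^2)=(k-k_1)(k+k_1)$, a frequency case analysis) and correctly attributes the roles of the three hypotheses: $a\le r$ to the exact resonance $k_1=-k$ (where the modulation gain degenerates completely, i.e.\ $l^2-k^2=0$), $a<1+r-s$ to the high--low interaction, and $a<1/2$ to the remaining cases. The paper's implementation, however, is more elementary and direct than the toolkit you invoke. It does not use dyadic decomposition, $L^2\times L^2\to L^2$ bilinear estimates, or lattice-point counting on parabolas. Instead it applies Cauchy--Schwarz in the inner sum to reduce everything to the single Schur bound
\begin{equation*}
\sup_{\tau,k}\ \sum_{l\ne k} M(k,l,\tau)^2 <\infty,
\qquad
M(k,l,\tau)=\frac{\langle k\rangle^{s+a}\langle l\rangle^{-s}\langle k-l\rangle^{-r}}{\langle\tau+k^2\rangle^{1-b'}\langle\tau+l^2\rangle^{b}},
\end{equation*}
eliminates $\tau$ entirely via $\langle\tau+k^2\rangle\langle\tau+l^2\rangle\gtrsim\langle l^2-k^2\rangle$ (exploiting $2b\ge 2-2b'$), and then handles the remaining single sum in $l$ with one elementary convolution lemma (Lemma \ref{lem3}). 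The case split is $\{|k-l|\gtrsim|k|\}$, $\{|l|\gtrsim|k|\}$, and the isolated term $l=-k$, which covers the same ground as your trichotomy but more economically. Your route would also work (and indeed reaches the same exponent thresholds), but it is heavier than needed precisely because $V$ is time-independent: this collapses one $\tau$-variable, as you note, which is what makes the direct Schur bound feasible. Two small slips: (i) your displayed pre-dualization formula has a spurious $\lambda$-integration; since $\widetilde V(\tau,k)=2\pi\widehat V(k)\delta(\tau)$, the space-time transform of $Vu$ is simply $\sum_{k_1\ne k}\widehat V(k-k_1)\widetilde u(\tau,k_1)$ with no extra convolution in the dual time variable. (ii) In the high--low regime the intermediate inequality should read $s+a-r-2(1-b')<0$, not $s+a-r-(1-b')<0$, since $\langle l^2-k^2\rangle^{-(1-b')}\approx\langle k\rangle^{-2(1-b')}$ there; your stated conclusion $a<1+r-s$ is nonetheless the right one as $b'\downarrow\tfrac12$.
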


We first recall  from \cite[Lemma 3.3]{ET2} the following simple lemma which is used several times in proving the proposition. 
\begin{lem}\label{lem3}
Let us define, for $\beta\ge 0$,
\begin{equation}\label{e:phi-k}
	\phi_{\beta}(k)
	:= \sum_{|n|\leq|k|}\frac{1}{\langle n\rangle^{\beta}}  
	\approx
	\begin{cases}
		1,&\beta>1,\\
		\log(1+\langle k\rangle),&\beta=1,\\
		\langle k\rangle^{1-\beta},&\beta<1.
	\end{cases}
\end{equation}
If $\beta \geq \gamma \geq 0$ and $\beta +\gamma>1$, then
\begin{equation*}
\sum_n \frac{1}{\langle n-k_1\rangle^{\beta}\langle n-k_2\rangle^{\gamma}} 
	\lesssim \frac{\phi_{\beta}(k_1-k_2)}{\langle k_1-k_2\rangle^{\gamma}}.
\end{equation*}
\end{lem}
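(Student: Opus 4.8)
The lemma bundles three elementary facts, and the plan is to establish them in turn: (a) the size of $\phi_\beta$ in \eqref{e:phi-k}; (b) the comparison of the lattice sum with the integral; (c) the decay bound $\int_{\mathbb R}\langle\tau-k_1\rangle^{-\beta}\langle\tau-k_2\rangle^{-\gamma}\,d\tau\lesssim\phi_\beta(k_1-k_2)\langle k_1-k_2\rangle^{-\gamma}$. All implied constants will be allowed to depend on $\beta,\gamma$. For (a): the function $x\mapsto\langle x\rangle^{-\beta}=(1+x^2)^{-\beta/2}$ is positive, even and decreasing on $[0,\infty)$, so the integral test gives $\phi_\beta(k)\approx 1+\int_0^{|k|}(1+x^2)^{-\beta/2}\,dx$; evaluating the last integral yields a bounded quantity if $\beta>1$, the value $\log(|k|+\sqrt{1+k^2})\approx\log(1+\langle k\rangle)$ if $\beta=1$, and a quantity comparable to $\langle k\rangle^{1-\beta}$ if $0\le\beta<1$ (the additive $1$ is harmless since $\langle k\rangle^{1-\beta}\ge1$). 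For (b): writing $g(\sigma):=\langle\sigma-k_1\rangle^{-\beta}\langle\sigma-k_2\rangle^{-\gamma}$ and using that $\langle\cdot\rangle$ is $1$-Lipschitz and $\ge1$, on any unit interval $[n,n+1]$ we have $\tfrac12\langle n-k_j\rangle\le\langle\sigma-k_j\rangle\le2\langle n-k_j\rangle$ for $j=1,2$, hence $g(\sigma)\approx g(n)$ there; integrating this over $[n,n+1]$ and summing over $n\in\mathbb Z$ gives $\int_{\mathbb R}g(\sigma)\,d\sigma\approx\sum_{n\in\mathbb Z}g(n)$, which is the asserted equivalence.

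The substance is part (c). After the substitution $\sigma=\tau-k_2$, with $K:=k_1-k_2$, the integral becomes $I:=\int_{\mathbb R}\langle\sigma-K\rangle^{-\beta}\langle\sigma\rangle^{-\gamma}\,d\sigma$; by $\sigma\mapsto-\sigma$ we may assume $K\ge0$, and $K=0$ is trivial ($I=\int_{\mathbb R}\langle\sigma\rangle^{-\beta-\gamma}\,d\sigma<\infty$ by $\beta+\gamma>1$). For $K\ge1$ (so $\langle K\rangle\approx K$) split $\mathbb R=R_1\cup R_2\cup R_3$ with $R_1=[-K/2,K/2]$, $R_2=[K/2,3K/2]=\{|\sigma-K|\le K/2\}$, and $R_3=(-\infty,-K/2)\cup(3K/2,\infty)$. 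On $R_1$, $\langle\sigma-K\rangle\gtrsim\langle K\rangle$, so $\int_{R_1}\lesssim\langle K\rangle^{-\beta}\int_{|\sigma|\le K/2}\langle\sigma\rangle^{-\gamma}\,d\sigma\lesssim\langle K\rangle^{-\beta}\phi_\gamma(K)\lesssim\langle K\rangle^{-\gamma}\phi_\beta(K)$, where the last step uses that $\delta\mapsto\langle K\rangle^{\delta}\phi_\delta(K)=\sum_{|n|\le|K|}(\langle K\rangle/\langle n\rangle)^{\delta}$ is nondecreasing in $\delta\ge0$ together with $\beta\ge\gamma$. On $R_2$, $\langle\sigma\rangle\gtrsim\langle K\rangle$, so $\int_{R_2}\lesssim\langle K\rangle^{-\gamma}\int_{|u|\le K/2}\langle u\rangle^{-\beta}\,du\lesssim\langle K\rangle^{-\gamma}\phi_\beta(K)$, which is already the target. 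On $R_3$ one checks $\langle\sigma-K\rangle\approx\langle\sigma\rangle$, so $\int_{R_3}\lesssim\int_{|\sigma|\ge K/2}\langle\sigma\rangle^{-\beta-\gamma}\,d\sigma\lesssim\langle K\rangle^{1-\beta-\gamma}=\langle K\rangle^{-\gamma}\langle K\rangle^{1-\beta}\le\langle K\rangle^{-\gamma}\phi_\beta(K)$, using $\beta+\gamma>1$ and the trivial bound $\langle K\rangle^{1-\beta}\lesssim\phi_\beta(K)$ from (a). Adding the three pieces gives $I\lesssim\langle K\rangle^{-\gamma}\phi_\beta(K)$, i.e. $\phi_\beta(k_1-k_2)\langle k_1-k_2\rangle^{-\gamma}$, which is the claim.

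I do not expect a genuine obstacle — the lemma is a routine calculus estimate, and indeed the paper merely \emph{recalls} it from \cite{ET2}. The only point that needs mild care is the bookkeeping in part (c): verifying that each of the three region contributions is absorbed into the single target $\phi_\beta(K)\langle K\rangle^{-\gamma}$. This rests entirely on the two monotonicity observations $\langle K\rangle^{\gamma}\phi_\gamma(K)\le\langle K\rangle^{\beta}\phi_\beta(K)$ (valid since $\beta\ge\gamma$, from the displayed representation above) and $\langle K\rangle^{1-\beta}\lesssim\phi_\beta(K)$ (immediate from (a)), both elementary. One should also note that the implied constants degenerate as $\beta+\gamma\downarrow1$, $\beta\uparrow1$ or $\gamma\uparrow1$, but this is irrelevant for the applications in this paper, where $\beta,\gamma$ stay in fixed ranges away from these thresholds.
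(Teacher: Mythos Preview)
Your proof is correct and complete. The paper itself does not prove this lemma at all: it simply recalls it from \cite[Lemma 3.3]{ET2}, as you already noticed. Your argument --- integral test for (a), unit-interval comparison for (b), and the three-region split $|\sigma|\le K/2$, $|\sigma-K|\le K/2$, $|\sigma|,|\sigma-K|\gtrsim K$ for (c) --- is the standard elementary route and would serve as a self-contained replacement for the citation. One cosmetic point: you handle $K=0$ and $K\ge 1$ explicitly, which covers integer $K$ (the only case used in the paper), but for arbitrary real $k_1,k_2$ you should also dispose of $0<|K|<1$; this is immediate since then $\langle\sigma-K\rangle\approx\langle\sigma\rangle$ and $\phi_\beta(K)\langle K\rangle^{-\gamma}\approx 1$.
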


\begin{proof}[Proof of Proposition \ref{prop1}]
First, let us prove \eqref{e:bil-est} by replacing the restricted spaces $X_I^{s+a,b'-1}$ and $X_I^{s,b}$  with $X^{s+a,b'-1}$ and $X^{s,b}$, respectively.  We write  
\begin{align}
	&\|R(V,u)\|_{X^{s+a,b'-1}}^2 \nonumber \\
	&= \int_{\mathbb R} \sum_{k\in \mathbb Z} \bigg | \sum_{l\in\mathbb Z\setminus\{k\} } \langle k\rangle^{s+a} \langle \tau +k^2 \rangle^{b'-1} \widehat{V}(k-l) \widetilde{u}(\tau,l)\bigg |^2 d\tau	\nonumber \\
	&= \int_{\mathbb R} \sum_{k\in \mathbb Z} \bigg | \sum_{l\in \mathbb Z\setminus \{k\}} M(k,l,\tau)
		 \langle k-l\rangle^{r} \widehat{V}(k-l) \langle l \rangle^{s}\langle \tau + l^2\rangle^{b} \,\widetilde{u}(\tau, l) \bigg |^2 d\tau ,  \label{e:RVu-X}
\end{align}
where 
\[	M(k,l,\tau):=\langle k\rangle^{s+a} \langle \tau +k^2 \rangle^{b'-1} \langle l \rangle^{-s} \langle k- l \rangle^{-r} \langle \tau + l^2 \rangle^{-b}.	\]
By the Cauchy--Schwarz and H\"older inequalities \eqref{e:RVu-X} is bounded by
\[	\left(\sup_{\tau,k}\sum_{l\in \mathbb Z\setminus \{k\}}M(k,l,\tau)^2 \right)
	\int \sum_{k\in \mathbb Z}\sum_{l\in \mathbb Z\setminus\{k\}}\langle k-l \rangle^{2r} |\widehat{V}(k-l)|^2 \langle l\rangle^{2s}\langle \tau + l^2\rangle^{2b}|\widetilde{u}(\tau,l)|^2 d\tau. 	\]
In the $\tau$-integration we replace the summation $\sum_{l\in \mathbb Z\setminus\{k\}}$ with $\sum_{l\in \mathbb Z}$, and then apply Young's inequality for the convolution, i.e., $\sum_{k\in\mathbb Z}|\sum_{l\in\mathbb Z}a_{k-l}b_l | \le \|a\|_{\ell^1(\mathbb Z)}\|b\|_{\ell^1(\mathbb Z)}$. Consequently we obtain the following estimate
\[	\|R(V,u)\|_{X^{s+a,b'-1}}^2\le \left(\sup_{\tau,k}\sum_{l\in \mathbb Z\setminus \{k\}}M(k,l,\tau)^2\right) \|V\|_{H^r}^2 \|u\|_{X^{s,b}}^2.	\]
Hence it remains to prove that 
\[	\sup_{\tau,k}\sum_{l\in \mathbb Z\setminus\{k\}}M(k,l,\tau)^2 <\infty.	\]

From the assumption, it is clear that $2b\ge 2-2b'$. Hence, by the easy inequality $\langle \tau +m\rangle \langle \tau +n\rangle \ge \frac12 \langle n-m\rangle$, we have
\begin{align}
	\sum_{l\in \mathbb Z\setminus\{k\}} M(k,l,\tau)^2
	&\le \sum_{l\in \mathbb Z\setminus\{k\}}\frac{\langle k\rangle^{2s+2a} \langle l\rangle^{-2s} \langle k-l \rangle^{-2r} }{\langle \tau +k^2 \rangle^{2-2b'} \langle \tau + l^2\rangle^{2-2b'}} \nonumber	\\
	&\lesssim \sum_{l\in \mathbb Z\setminus\{k\}} \frac{\langle k\rangle^{2s+2a} \langle l\rangle^{-2s} \langle k-l \rangle^{-2r} }{\langle l^2 - k^2 \rangle^{2-2b'}} \nonumber	\\
	&\lesssim  \sum_{l\in \mathbb Z\setminus\{\pm k\}} \frac{\langle k\rangle^{2s+2a} \langle l\rangle^{-2s} \langle k- l \rangle^{-2r} }{\langle l+ k \rangle^{2-2b'} \langle l- k \rangle^{2-2b'}} + \langle k \rangle^{2a-2r}. \label{e:summation}
\end{align}
Since $a\leq r$ it is obvious that $\langle k \rangle^{2a-2r}\le 1$, so we need only to show the  summation in \eqref{e:summation} is bounded uniformly in $k\in \mathbb Z$. That is, we have to prove that for some $\epsilon>0$
$$S:=\sum_{l\in\mathbb Z\setminus \{\pm k\}} \langle l\rangle^{-2s}\langle k-l\rangle^{-2r+\epsilon-1}\langle k+l\rangle^{\epsilon-1}\lesssim \langle k \rangle^{-2(s+a)}.$$
Here, for the sake of brevity we renamed $\epsilon$ as $\epsilon/2$, so $2b,2b'\in(1,1+\epsilon)$.
Given $k\neq0,$ We also write simply $\phi_\beta$ to mean $\phi_\beta(ck)$ with $c$ a constant because they have the same order of magnitude.

We can assume $k>0$ by the symmetry and because for $k=0$ the series converges for any $\epsilon<1/2$.
By using Lemma \ref{lem3}, we have 
\begin{equation}
S \lesssim 
\begin{cases} \langle k\rangle^{-2r-1+\epsilon-\min\{2s,1-\epsilon\}} \phi_{\max\{2s,1-\epsilon\}} \quad \textrm{if} \quad |l|\leq k/2, \\
\langle k\rangle^{-2s+\epsilon-1} \phi_{2r+1-\epsilon} + \langle k\rangle^{-2s+\epsilon-1-2r} \phi_{1-\epsilon} \quad \textrm{if} \quad k/2\leq |l|\leq 3k/2, \\
\langle k\rangle^{-2s-2r-1+2\epsilon} \quad \textrm{if} \quad 3k/2\leq |l|.
\end{cases}
\end{equation}
In the second case, the second term is negligible because $\langle k\rangle^{-2r}\phi_{1-\epsilon} \lesssim \phi_{2r+1-\epsilon}$ is trivial for $r=0$ and $0<\epsilon<2r$. In both situations, $\phi_{2r+1-\epsilon}\lesssim \langle k\rangle^{\epsilon}$ and we deduce $S\lesssim \langle k\rangle^{2\epsilon-1-2s}$.
This bound also holds in the third case (because $r\ge0$).
Hence $a<1/2$ assures $S\lesssim \langle k\rangle^{-2(s+a)}$ when $\epsilon$ is small enough.
It only remains to check the first case.
Since $\phi_{\max\{2s,1-\epsilon\}}\lesssim \langle k\rangle^{\epsilon}$ for $s\ge 1/2$, we deduce $S \lesssim \langle k\rangle^{-2r-2+3\epsilon}$ which assure $S\lesssim \langle k\rangle^{-2(s+a)}$ if $a<1+r-s$ and $\epsilon$ is small enough.
For $0<s<\frac12$, we have $\langle k\rangle^{-\min\{2s,1-\epsilon\}} \phi_{\max\{2s,1-\epsilon\}}\lesssim \langle k\rangle^{-2s+\epsilon}$ and $S\lesssim \langle k\rangle^{-2(s+a)}$ if $a<1/2$ and $\epsilon$ is small enough.

We now prove \eqref{e:bil-est}. Let $\chi_I\in C_0^\infty(\mathbb R)$ be such that $\chi_I=1$ on $I$. By the definition of the restricted Bourgain space and the estimate what we have obtained, we see that
\[	\|R(V,u)\|_{X_I^{s+a,b'-1}} \le \|\chi_I R(V,u)\|_{X^{s+a,b'-1}} \lesssim \|V\|_{H^r} \|\chi_I u\|_{X^{s,b}}.	\]
Taking infimum over all functions $w$ such that $w\vert_{ I\times \mathbb R}=u$ on the right side, we get the estimate \eqref{e:bil-est}.
\end{proof}

\section{Local well-posedness and smoothing estimate} \label{sec4}
In this section, we make use of Proposition \ref{prop1} and employ the contraction mapping principle to establish local well-posedness in the Sobolev spaces for the initial value problem \eqref{PS}. We then prove Proposition \ref{thm2}.

Let $u$ be the solution to the equation \eqref{PS} and set $v(t,x):=e^{it\widehat V(0)}u(t,x)$. Since 
\begin{align*}
	iv_t+v_{xx} &= e^{it\widehat V(0)}\big(i u_t - \widehat V(0) u + u_{xx} \big) \\
	&= e^{it\widehat V(0)} \big(V -\widehat V(0) \big)u =\big(V -\widehat V(0) \big)v	= R(V,v),
\end{align*}
the initial value problem \eqref{PS} is equivalent to the following:
\begin{equation}\label{MPS}
	\begin{cases}
	iv_t + v_{xx} =R(V,v),\\
	v(0, x) = f(x).	
	\end{cases}
\end{equation}
Let us recall the definition of $\mathcal P(t,x)$ from \eqref{e:inhom} and note that
\begin{equation} \label{cov}
	v(t,x)-e^{it\partial_{xx}}f = e^{it\widehat V(0)} \big( u(t,x) - e^{it(\partial_{xx}-\widehat V(0))} f \big) =e^{it\widehat{V}(0)}\mathcal{P}(t,x).
\end{equation}
Since $\|\mathcal{P}(t,x)\|_{H^s} = \|e^{it\widehat{V}(0)}\mathcal{P}(t,x)\|_{H^s}$, in order to prove Proposition \ref{thm2} it is enough to prove the smoothing estimate for $e^{it\widehat{V}(0)}\mathcal{P}(t,x)$ instead of $\mathcal P(t,x)$. Therefore, in this section, we may and shall consider the equation \eqref{MPS} instead of \eqref{PS}.

\subsection{Local well-posedness}\label{subsec2}
We now prove the initial value problem \eqref{MPS} is locally well-posed in $H^s$. We use the notation $X^{s,b}_\delta :=X^{s,b}_{[0,\delta]}$.
\begin{thm}\label{local}
	Let $V\in H^r$ for $r\geq 0$ and suppose $0< s < 1+r$ and $\frac12 <b<\frac 12+\epsilon$ for some $\epsilon>0$ small enough. 
	For every  $g\in H^s$, there exists a time $\delta>0$ and a unique solution $v_g\in X_{\delta}^{s,b}$ for the integral equation
	\begin{equation}\label{e:int-eq}
		v_g(t,x) = e^{it\partial_{xx}}g-i\int_0^t e^{i(t-t')\partial_{xx}}R(V,v_g)(t') dt'.	
	\end{equation}
	Furthermore, the mapping $g\mapsto v_g$ is Lipschitz continuous  with the estimate
	\begin{equation}\label{prop2}
		\|v_g\|_{X_\delta^{s,b}}\lesssim \|g\|_{H^{s}}.
	\end{equation}
\end{thm}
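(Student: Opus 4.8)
The plan is to solve the integral equation \eqref{e:int-eq} by the contraction mapping principle in a ball of the restricted Bourgain space $X_\delta^{s,b}$, arranging that the length $\delta$ depends only on $\|V\|_{H^r}$ (and on $s,b$) and not on the datum $g$; this independence is exactly what makes the conclusion uniform over all $g\in H^s$. First I would fix an auxiliary exponent $b'$ with $\frac12<b<b'<\frac12+\epsilon$ so that Proposition \ref{prop1} applies with $a=0$: the requirements $a\le r$, $a<1+r-s$, $a<\frac12$ hold trivially, since $r\ge 0$ and $s<1+r$, and hence for every interval $I$,
\[	\|R(V,w)\|_{X_I^{s,b'-1}}\lesssim \|V\|_{H^r}\,\|w\|_{X_I^{s,b}}.	\]
On $I=[0,\delta]$ with $0<\delta\le1$, define $\Phi(w)(t)=e^{it\partial_{xx}}g-i\int_0^t e^{i(t-t')\partial_{xx}}R(V,w)(t')\,dt'$; the two estimates of Lemma \ref{lem1} show that $\Phi$ is well defined on $X_\delta^{s,b}$ (the right-hand $R(V,w)$ lies in $X_\delta^{s,b'-1}\hookrightarrow X_\delta^{s,b-1}$, on which the Duhamel operator acts).

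The heart of the matter is the chain of inequalities obtained by combining Lemma \ref{lem1} (with the exponent $b$), Lemma \ref{lem2} applied to the \emph{lower} indices $b-1<b'-1$, which both lie in $(-\frac12,\frac12)$, and the bilinear estimate above:
\[	\|\Phi(w)\|_{X_\delta^{s,b}}\lesssim \|g\|_{H^s}+\|R(V,w)\|_{X_\delta^{s,b-1}}\lesssim \|g\|_{H^s}+\delta^{b'-b}\|R(V,w)\|_{X_\delta^{s,b'-1}}\lesssim \|g\|_{H^s}+\delta^{b'-b}\|V\|_{H^r}\|w\|_{X_\delta^{s,b}}.	\]
The positive power $\delta^{b'-b}$ coming out of Lemma \ref{lem2} is the only source of smallness, and it is what drives the argument. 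Since $R(V,\cdot)$ is linear in its second slot, $R(V,w_1)-R(V,w_2)=R(V,w_1-w_2)$, so the same reasoning gives $\|\Phi(w_1)-\Phi(w_2)\|_{X_\delta^{s,b}}\le C\delta^{b'-b}\|V\|_{H^r}\|w_1-w_2\|_{X_\delta^{s,b}}$ for a fixed $C$ bounding all the implied constants above. Choosing $\delta=\delta(\|V\|_{H^r},s,b,b')>0$ so that $C\delta^{b'-b}\|V\|_{H^r}\le\frac12$, I conclude that $\Phi$ is a $\frac12$-contraction on the closed ball $\{w\in X_\delta^{s,b}:\|w\|_{X_\delta^{s,b}}\le 2C\|g\|_{H^s}\}$ and maps it into itself; the Banach fixed point theorem then produces a fixed point $v_g$ in that ball. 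The fixed-point identity together with the ball constraint gives \eqref{prop2}, and $v_g\in C([0,\delta];H^s)$ by Lemma \ref{bourgain}, so \eqref{e:int-eq} holds for every $t\in[0,\delta]$.

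For the Lipschitz dependence on the datum, I subtract the integral equations for $v_{g_1}$ and $v_{g_2}$ and estimate exactly as above, again using linearity of $R(V,\cdot)$, to obtain $\|v_{g_1}-v_{g_2}\|_{X_\delta^{s,b}}\le C\|g_1-g_2\|_{H^s}+\frac12\|v_{g_1}-v_{g_2}\|_{X_\delta^{s,b}}$, hence $\|v_{g_1}-v_{g_2}\|_{X_\delta^{s,b}}\le 2C\|g_1-g_2\|_{H^s}$; since $\delta$ is independent of the data, this holds for all $g_1,g_2\in H^s$. Uniqueness in the full space $X_\delta^{s,b}$, not merely in the ball, follows from a standard time-subdivision argument: a solution of the homogeneous integral equation vanishes on every subinterval of length $\delta'$ with $C(\delta')^{b'-b}\|V\|_{H^r}<1$ by the contraction estimate, and iterating over finitely many such subintervals, using the time-translation invariance of all the estimates, shows it is identically zero on $[0,\delta]$.

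The main obstacle: there is no genuine analytic difficulty left once Proposition \ref{prop1} is in hand — this is the textbook Bourgain-space contraction scheme, and the global-in-data character of the conclusion is automatic because the equation is linear in $v$. The one point that really has to be handled with care is the bookkeeping of the dispersive exponents: the Duhamel estimate of Lemma \ref{lem1} costs a full unit of $\langle\tau+k^2\rangle$, depositing us in $X^{s,b-1}$, whereas the bilinear estimate only delivers $X^{s,b'-1}$; these can be matched — and, crucially, with a positive power of $\delta$ to spare — only by keeping $b<b'$ both slightly above $\frac12$, so that $b-1<b'-1$ both lie in the range $(-\frac12,\frac12)$ where Lemma \ref{lem2} operates.
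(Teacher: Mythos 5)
Your proof is correct and follows essentially the same contraction-mapping scheme as the paper: fix $b'$ with $\tfrac12<b<b'<\tfrac12+\epsilon$, apply Proposition \ref{prop1} with $a=0$, use Lemma \ref{lem1} and Lemma \ref{lem2} (on the indices $b-1<b'-1$ in $(-\tfrac12,\tfrac12)$) to extract the factor $\delta^{b'-b}$, and close the contraction on a ball whose radius is proportional to $\|g\|_{H^s}$, with $\delta$ depending only on $\|V\|_{H^r}$, $s$, $b$, $b'$. One small remark: your final time-subdivision argument for uniqueness in the full space $X_\delta^{s,b}$ is correct but unnecessary here — since $R(V,\cdot)$ is linear, the difference estimate $\|\Phi(w_1)-\Phi(w_2)\|_{X_\delta^{s,b}}\le C\delta^{b'-b}\|V\|_{H^r}\|w_1-w_2\|_{X_\delta^{s,b}}$ holds for \emph{all} $w_1,w_2\in X_\delta^{s,b}$ with no ball constraint, so with $C\delta^{b'-b}\|V\|_{H^r}\le\tfrac12$ any two fixed points in $X_\delta^{s,b}$ coincide immediately.
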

\begin{rem}
By Lemma \ref{bourgain} we have $X_{\delta}^{s,b} \hookrightarrow C([0,\delta], H^s)$. Thus, the theorem establishes the local well-posedness of \eqref{MPS} (hence \eqref{e:schrodinger}) in $H^s$,  in the sense of \cite[Definition 3.4]{T2}. 
\end{rem}
\begin{rem}\label{r:unif-t}
In fact, as can be seen in the proof (see \eqref{e:delta} below), the small time $\delta$ is  independent of the initial data $f\in H^s$. Since the initial value problem \eqref{MPS} is invariant under time translations, we can patch the solution \eqref{e:int-eq} along $t\in \mathbb R$ by repeatedly applying Theorem \ref{local} to obtain the global solution $v\in C(\mathbb R, H^s)$ to \eqref{MPS}. Also, the estimate \eqref{prop2} combined with Lemma \ref{bourgain} implies the following global bound
\begin{equation}\label{remark}
	\|v(t)\|_{H^s}\lesssim e^{|t|}\|f\|_{H^s}, \quad \forall t\in \mathbb R.
\end{equation}
Indeed, by using Lemma \ref{bourgain} and \eqref{prop2} in turn, we have $\sup_{t\in[0,\delta]}\|v\|_{H^s}\lesssim \|v\|_{X_\delta^{s,b}}\leq C\|f\|_{H^s}$,
and in the same way 
$\sup_{t\in[\delta,2\delta]}\|v\|_{H^s}\leq C\|v(\delta)\|_{H^s}\leq C^2\|f\|_{H^s}$.
Repeating this process, we get
$$\sup_{(j-1)\delta \leq t \leq j\delta}\|v(t)\|_{H^s} \leq C^{j} \|f\|_{H^s}\lesssim_\delta C^{t/\delta}\|f\|_{H^s}\lesssim_\delta e^{t}\|f\|_{H^s}$$
for all integers $j\ge1$. The case where $j\le0$ follows from a similar argument.  
\end{rem}

\begin{proof}[Proof of Theorem \ref{local}]
	For each $g\in H^s$ we define 
	\[	\Gamma_g (v)(t,x) := e^{it\partial_{xx}}g - i\int_0^t e^{i(t-t')\partial_{xx}}R(V,v)(t') dt'.	\]
	We aim to show that, for some small time $0<\delta<1$ and $K>0$ to be chosen later, the mapping $\Gamma_g$ is a contraction on the set
	\[	X:=\big\{w\in X_{\delta}^{s,b} \colon \|w\|_{X_{\delta}^{s,b}}\leq K\|g\|_{H^s}\big\}.	\]

	Let us first show that the mapping $\Gamma_g\colon X\to X$ is well-defined.  Lemmas \ref{lem2} and \ref{lem1}  imply that if $w\in X$, then
	\begin{align*}
		\|\Gamma_g (w)\|_{X_{\delta}^{s,b}}
		&\leq \|e^{it\partial_{xx}}g\|_{X_{\delta}^{s,b}}+\Big\|\int_0^t e^{i(t-t')\partial_{xx}}R(V,w)(t')dt'\Big\|_{X_{\delta}^{s,b}}\\
		&\leq C\|g\|_{H^s}+C \|R(V,w) \|_{X_{\delta}^{s,b-1}}\\
		&\leq C\|g\|_{H^s}+C\delta^{b'-b} \|R(V,w) \|_{X_{\delta}^{s,b'-1}}
	\end{align*}
	whenever $0<\delta< 1$ and  $\frac12<b<b'<1$. We then apply Proposition \ref{prop1} (with $a=0$)  to see that
	\[	\|\Gamma_g (w)\|_{X_{\delta}^{s,b}}
	\leq C\|g\|_{H^s} +C\delta^{b'-b}\|V\|_{H^r} \|w\|_{X_{\delta}^{s,b}},	\]
	provided that $\frac12<b<b'<\frac 12+\epsilon$ for $\epsilon>0$ small enough. Since $w\in X$ we get
	\[	\|\Gamma_g (w)\|_{X_{\delta}^{s,b}} \leq C_{0} \big(1+K\delta^{b'-b}\|V\|_{H^r} \big) \|g\|_{H^s}.	\]
	If we set $K \ge 2C_0$ and take $0<\delta<1$ so small that 
	\begin{equation}\label{e:delta}
		\delta^{b'-b}<1/(1+K\|V\|_{H^r}),
	\end{equation}
	then it follows that 
	\begin{equation*}
		\|\Gamma_g (w)\|_{X_{\delta}^{s,b}}\leq K\|g\|_{H^s}.
	\end{equation*} 
	Therefore, $\Gamma_g (X)\subset X$ for $g\in H^s$.
	
	Secondly, let us prove that the map $\Gamma_g\colon X\to X$ is a contraction. In a similar manner, by Lemmas \ref{lem2} and \ref{lem1} and Proposition \ref{prop1} we have 
	\begin{align*}
		\|\Gamma_g (w_1)-\Gamma_g (w_2)\|_{X_{\delta}^{s,b}}&\leq C\delta^{b'-b} \|R(V,w_1-w_2) \|_{X_{\delta}^{s,b'-1}}\\
		&\leq C\delta^{b'-b}\|V\|_{H^r} \|w_1-w_2\|_{X_{\delta}^{s,b}},
	\end{align*}
	which implies $\Gamma_g$ is a contraction on $X$ for $\delta$ satisfies \eqref{e:delta}, if we set $K\ge 2C$ here. By applying the contraction mapping principle, it follows that there exists a unique $v_g\in X$ solving the equation $\Gamma_g(v_g)=v_g$. 
	
	The continuity of the map $g\mapsto v_g$ also follows from utilizing Lemmas \ref{lem2} and \ref{lem1} and Proposition \ref{prop1} as in the above. Indeed, by \eqref{e:int-eq} we see that for $g, h\in H^s$,
	\begin{align*}
		&\|v_g-v_h\|_{X_\delta^{s,b}} \\
		&\le \|e^{it\partial_{xx}}(g-h)\|_{X_\delta^{s,b}} +\Big \| \int_0^t e^{i(t-t')\partial_{xx}} \big(R(V,v_g)(t')-R(V,v_h)(t')\big) dt' \Big\|_{X_\delta^{s,b}} \\
		&\le C\|g-h\|_{H^s} +C\delta^{b'-b}\|V\|_{H^r} \|v_g-v_h\|_{X_\delta^{s,b}} \\
		&\le C\|g-h\|_{H^s} +\frac{1}{2}\|v_g-v_h\|_{X_{\delta}^{s,b}},
	\end{align*}
	from which it follows that $\|v_g-v_h\|_{X_\delta^{s,b}}\lesssim \|g-h\|_{H^s}$.
\end{proof}

\subsection{Smoothing estimate: Proof of Proposition \ref{thm2}} \label{subsec3}
In this final section, we prove Proposition \ref{thm2}. 

Let $f$ and $V$ be given as in the statement of Proposition \ref{thm2}. 
It is enough to prove that for every $t_0 \ge 0$ there exists a constant $C>0$ such that 
\begin{equation}\label{want}
	\| \mathcal{P}(t_0,x)\|_{H^{s+a}} \leq C \|V\|_{H^r} \|f\|_{H^s}.
\end{equation} 
This gives regularity gain for $\mathcal P(t,\cdot)$ compared to the global estimate \eqref{remark} for the solution $v$.  In order to prove \eqref{want}, the bilinear estimate (Proposition \ref{prop1}) as well as the local well-posedness (Theorem \ref{local}) is crucial.

Let us invoke the $\delta>0$ in Theorem \ref{local} and pick  $m \in \mathbb N$ such that $(m-1)\delta\leq t_0 < m\delta$. We also set 
\begin{equation}\label{e:initial}
	v_j(x) = v(j\delta, x)   \quad \text{and} \quad I_j =[\delta j, \delta (j+1)].
\end{equation}
Applying Theorem \ref{local} with the initial data $v_j$, it is possible to write the solution $v$ as
\[	v(t) =e^{i(t-\delta j) \partial_{xx}} v_j  -i\int_{\delta j}^t e^{i(t-t')\partial_{xx}} R(V,v)(t') dt', \quad t\in I_j. 	\] 
By Lemmas \ref{bourgain} and \ref{lem1} we have, for $t\in I_j$,
\begin{align*}
\| v(t) - e^{i(t-\delta j)\partial_{xx}} v_j \|_{H^{s+a}}
	&\lesssim \Big\|\int_{\delta j}^t e^{i(t-t')\partial_{xx}}R(V,v)(t')dt' \Big\|_{X_{I_j}^{s+a,b}}\\
	&\lesssim \|R(V,v) \|_{X_{I_j}^{s+a,b-1}}
\end{align*}
whenever $\frac12<b\le 1$. Let us choose $b$ sufficiently close to $\frac12$, and then apply Proposition \ref{prop1} (with $b'=b$)  to see that
\[	\|R(V,v) \|_{X_{I_j}^{s+a,b-1}} \lesssim \|V\|_{H^r}\|v\|_{X_{I_j}^{s,b}}.	\]
Hence, by the estimate \eqref{remark}, we conclude that 
\begin{equation}\label{e:est-j}
	\| v(t) - e^{i(t-\delta j)\partial_{xx}} v_j \|_{H^{s+a}} \le C e^{t_0} \|V\|_{H^r} \|f\|_{H^s}, \quad t\in I_j,	
\end{equation}
for $j\in \{0,1,2,\cdots, m-1\}$.  We recall \eqref{cov} and write 
\begin{align*}
	&e^{it_0 \widehat V(0)}\mathcal P(t_0,x) 
	= v(t_0) -e^{it_0 \partial_{xx}} f \\
	&= v(t_0)-e^{i(t_0 -\delta (m-1))\partial_{xx}}v_{m-1}
		+\sum_{j=1}^{m-1} e^{i(t_0 -\delta j )\partial_{xx}} \big( v_{j}-e^{i\delta\partial_{xx}}v_{j-1} \big).
\end{align*}
Application of the estimate \eqref{e:est-j} gives
\begin{align*}
&\|\mathcal{P}(t_0,\cdot)\|_{H^{s+a}} \\
&\le \| v(t_0)-e^{i(t_0 -\delta (m-1))\partial_{xx}}v_{m-1} \|_{H^{s+a}}
	+\sum_{j=1}^{m-1} \| v_{j}-e^{i \delta\partial_{xx}}v_{j-1} \|_{H^{s+a}} \\
&\le Cme^{t_0}\|V\|_{H^r}\|f\|_{H^s}.
\end{align*}
Therefore, we obtain the desired estimate \eqref{want}.

\end{document}